\newtheorem{theorem}{Theorem}[section]
\newtheorem{definition}{Definition} [section]
\newtheorem{exmp}{Example}[section]
\newcounter{yuppo}
\renewcommand{\setminus}{-}
\renewcommand{\phi}{\varphi}
\newcommand{\noparty}[1]{}%{\colorbox{Apricot}{#1}}
\title{The momentum polytope of actions on contact manifolds via toric varieties}
\author{Amna Shaddad}
\address{Instituto de Ciencias Matem\'{a}ticas (ICMAT) \\
	CSIC (Madrid)}
\email{amna\_sh@msn.com; amna.shaddad@icmat.es}
\keywords{Momentum polytope, Contact manifold, Toric Varieties.}
\subjclass[2010]{53D20; 14L24.}
\begin{document}
	\maketitle
	\begin{abstract}
		\noindent

		%\lipsum[1] % Dummy text
		Due to a previous result which states that contact varieties are isomorphic to certain varieties, the momentum polytopes of contact manifolds are convex.
	\end{abstract}
	
	\section*{Institution and mail address}
	Instituto de Ciencias Matem\'{a}ticas (ICMAT)\\
	{\fontfamily{cmtt}\selectfont Office 417, Calle Nicol\'{a}s Cabrera 13-15, 28049, Cantoblanco, Madrid, Spain\\
		amna\_sh@msn.com; amna.shaddad@icmat.es}
	
	%----------------------------------------------------------------------------------------
	%	AUTHOR AFFILIATIONS
	%----------------------------------------------------------------------------------------
	
	%\let\thefootnote\relax\footnotetext{* \textit{Department of Biology, University of Examples, London, United Kingdom}}
	
	%	\let\thefootnote\relax\footnotetext{\textsuperscript{1} \textit{ ICMAT, Madrid, Spain}}
	
	%----------------------------------------------------------------------------------------
	
	\newpage % Start the article content on the second page, remove this if you have a longer abstract that goes onto the second page
	
	%----------------------------------------------------------------------------------------
	%	INTRODUCTION
	%----------------------------------------------------------------------------------------
	\section{Introduction}
	
	The introduction of symplectic geometry comes from the investigation of light rays through a medium, this was extended to every mechanical system. One thing remained constant - each ray would be described according to two pieces of information - the position the ray is incident with the medium and the angle at which the ray hits the medium. According to Hamilton's paper `Caustics', the optical system used to study the geometry of rays of light, utilises four variables to locally specify the rays that enter a system from the left and exit from the right (the light rays to the left are straight line segments); two of the variables specify the point of intersection of the line with the plane perpendicular to the optical axis and two additional angular variables giving the angle of the line to this plane. Relating the incoming line segments on the left to the outgoing line segments on the right, is a transformation from the incoming coordinates to the outgoing ones, called symplectic diffeomorphisms.
	Later Hamilton discovered that this same method applies without modification to mechanics. Replacing the optical axis by the time axis, light rays  for trajectories of the system and the four incoming and four outgoing variables by the $2n$ incoming and outgoing variables of the phase space of the mechanical system.
	
	And therefore symplectic geometry was birthed and the associated momentum map and its image accordingly.
	
	Contact systems exist on $2n-1$ or odd dimensional manifolds. Contact geometry has been used to describe to many physical phenomena and is related to a lot of other mathematical structures. Sophus Lie was the first to introduce them through his work on partial differential equations. Then Gibbs reintroduced them through thermodynamics, Huygen's later through geometric optics and Hamiltonian dynamics. More recently contact structures have been found to describe heat transport through a medium. But this was only after previous work relating contact structures to Riemannian geometry, low dimensional topology and subelleptic operators.
	
	Focussing on cooriented contact structures i.e. hyperplane fields $\xi$ on oriented smooth $2n-1$-dimensional manifolds which are given by the kernels of $\alpha\in\Omega^1(M)$ so that $\alpha\wedge d\alpha^{n-1}$ is a positive volume form on said manifold. We establish the momentum polytope for such structures.
	
	We first introduce some notation concerning toric varieties in particular. We also talk about remedying singular points of toric varieties. Later we introduce how these apply to orbits, line bundles and topology. What is especially significant here is the introduction of $\chi(u)$, the element of the group algebra of semigroup $S$ defined by the cone in the lattice $N$, $\mathbb{C}[S]$ corresponding to $u$ in $S$. For the momentum map is defined according to $\chi(u)$ as a retraction. This is shown to establish a momentum polytope for actions on contact manifolds by way of example in the final section. Such a momentum map works independently of dimensional and some structural restrictions on the manifold acted on.
	
	\subsection{Main Result}
	
	The result of this paper is
	
	\begin{theorem}
		For $X$ a smooth projective toric variety of dimension $2n+1$ $(n\geq 1)$, defined over the field $\mathbb{C}$ of complex numbers and endowed with a contact structure. Then either $X$ is isomorphic to the complex projective space $\mathbb{P}^{2n+1}$ or $X$ is isomorphic to the variety $\mathbb{P}_{\mathbb{P}^1\times\cdots\times\mathbb{P}^1}(\mathcal{T}_{\mathbb{P}^1\times\cdots\times\mathbb{P}^1})$. The momentum map is defined $$\mu(x)=\frac{1}{\sum|\chi^u(x)|}\sum\limits_{u\in P\cap M}|\chi^u(x)|u$$ where $\chi^u$ for $u\in P\cap M$ are the sections of $\varphi:X\rightarrow\mathbb{P}^{2n}$. This map is convex.
	\end{theorem}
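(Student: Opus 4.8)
The plan is to treat the two assertions of the theorem separately: the identification of $X$ with one of the two listed varieties, and the convexity of $\mu$.

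For the classification I would import, essentially verbatim, the structural result alluded to in the abstract. A smooth projective toric variety of odd dimension $2n+1$ carrying a contact structure is necessarily Fano, and the compatibility between its fan and the contact distribution leaves exactly the two possibilities $\PP^{2n+1}$ and $\PP_{\PP^1\times\cdots\times\PP^1}(\mathcal{T}_{\PP^1\times\cdots\times\PP^1})$. In both cases the contact structure provides the morphism $\varphi\colon X\to\PP^{2n}$ of the statement, defined by sections of a line bundle $L$ on $X$; the momentum polytope $P\subset M_{\R}$ of $L$ is a lattice polytope whose lattice points $u\in P\cap M$ index the weight basis $\{\chi^u\}$ of $H^0(X,L)$. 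Hence this step reduces to matching notation once the cited isomorphism statement is in hand, and the content of the theorem is its last sentence.

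For convexity I would compute $\mu$ on the dense orbit and then pass to the closure. Let $T\cong(\C^*)^{2n+1}$ be the open torus of $X$, with Lie algebra $\liet$, and write a point of the principal orbit $T\cdot x_0\cong T$ as $x=\exp(v+\sqrt{-1}\,w)$ with $v,w\in\liet$. The $\chi^u$ generate $L$, hence have no common zero, so $\mu$ is well defined and continuous on all of $X$; and along $T\cdot x_0$ one has $|\chi^u(x)|=e^{\sx u,v\xs}$, whence
\[
\mu(x)=\frac{\sum_{u\in P\cap M}e^{\sx u,v\xs}\,u}{\sum_{u\in P\cap M}e^{\sx u,v\xs}}=\nabla_v\log\sum_{u\in P\cap M}e^{\sx u,v\xs},
\]
the gradient of the convex log-sum-exp function attached to $P\cap M$. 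By the classical Legendre-duality description of this gradient map, $\mu$ carries $T\cdot x_0$ onto $\relint\conv(P\cap M)$. Since $X$ is compact and $T\cdot x_0$ is dense in it, $\mu(X)=\overline{\mu(T\cdot x_0)}=\conv(P\cap M)=P$, the last equality because the vertices of $P$ lie in $M$. Thus $\mu(X)=P$ is a convex polytope, which is the assertion.

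Equivalently, up to the normalization chosen in the displayed formula $\mu$ is a renormalization of the Atiyah--Guillemin--Sternberg moment map of the Hamiltonian action of the compact torus $T_c\subset T$ on $X$ with respect to the Kähler form $\iota^*\omfs$, $\iota\colon X\to\PP(\C^{P\cap M})$ being the morphism $x\mapsto[\chi^u(x)]_{u\in P\cap M}$; both maps take values in $\conv(P\cap M)$ and are onto it, so convexity follows equally from the general convexity theorem, the image being the convex hull of the $T_c$-fixed-point values, i.e.\ of the vertices of $P$. I expect the only genuine obstacle to be organizational: reconciling the notation of the imported classification --- the morphism $\varphi$, the bundle $L$, the sections $\chi^u$ --- with the toric moment-map picture, and checking that normalizing by $\sum|\chi^u(x)|$ rather than by $\sum|\chi^u(x)|^2$ does not alter the image, which is clear since both normalizations yield convex combinations of the $u$'s weighted positively on exactly the nonvanishing coordinates. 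The boundary strata where some $\chi^u$ vanish then require no separate treatment, being absorbed by the compactness argument.
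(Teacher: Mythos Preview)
Your proposal is correct, and the core analytic idea---that on the open torus orbit $|\chi^u(x)|=e^{\sx u,v\xs}$ and the resulting weighted average of the $u$'s is a diffeomorphism onto the interior of $\conv(P\cap M)$---is exactly the lemma the paper invokes as well. The classification part is handled the same way in both: it is Druel's theorem, which the paper restates and whose proof (via Mori contractions and Wi\'sniewski's inequality) it reproduces; your plan to import it is what the paper effectively does.

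Where you diverge is in the packaging of the convexity step. You argue on the dense orbit only, recognise $\mu$ there as $\nabla_v\log\sum e^{\sx u,v\xs}$, invoke Legendre duality to hit $\relint P$, and close up by compactness. The paper instead follows Fulton's textbook route: it builds the retraction $X\to X_\geq=X/S_N$ onto the ``manifold with corners'', shows $\mu$ descends to $\tilde\mu:X_\geq\to M_\R$, and then proves the stronger stratum-by-stratum statement that for each face $Q$ of $P$ with cone $\sigma$, $\tilde\mu$ carries $(O_\sigma)_\geq$ bijectively onto $\inte Q$. Your argument is shorter and gets exactly the convexity of the image; the paper's gives in addition the face-by-face homeomorphism $X_\geq\cong P$, at the cost of developing the $X_\geq$ machinery. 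Your alternative via Atiyah--Guillemin--Sternberg also appears in the paper at the very end, where the map is factored through the standard quadratic moment map $\mathfrak{M}$ on $\PP^{r-1}$; your remark that the $\ell^1$ versus $\ell^2$ normalization does not change the image is the right way to reconcile the two, and the paper leaves this point implicit.

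One small inaccuracy to clean up: you assert that a smooth projective toric contact variety is ``necessarily Fano''. What is actually used is $K_X=-(n{+}1)L$, and the dichotomy in Druel's argument is between $l(R)=2n{+}2$ (Fano with $\mathrm{Pic}=\Zeta$, hence $\PP^{2n+1}$) and $l(R)=n{+}1$ (the fibred case). Since you are importing the classification anyway this does not affect your proof, but the sentence as written overstates what is needed.
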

	
	\subsection{Literature Review}

	A hyperplane field $\xi$ on a manifold $M$ is a codimension one sub-bundle of the tangent bundle $TM$. Hyperplane fields can always locally be described as the kernel of a 1-form. This means that for every point in $M$, there exists a neighbourhood $U$ and a 1-form $\alpha$ defined on $U$ whose kernel of the linear map $\alpha_x:T_xM\rightarrow\mathbb{R}$ is $\xi_x$ for all $x\in U$. $\alpha$ is the local defining form for $\xi$ and a contact structure on a $(2n+1)$-dimensional manifold $M$ is a 'maximally non-integrable hyperplane field' $\xi$. If for any locally defining 1-form $\alpha$, $\alpha\wedge d\alpha\neq0$ then the hyperplane field $\xi$ is said to be maximally non-integrable. Essentially what this says is that the form is pointwise never equal to zero. This non-integrability of $\xi$ geometrically means that no hypersurface in $M$ can be tangent to $\xi$ along an open subset of the hypersurface. In other words, the hyperplane 'twists too much' to be tangent to the hypersurface. The pair $(M,\xi)$ is named a contact manifold and $\alpha$, the defining form for $\xi$ is the contact form for $\xi$.
	
	Let's look at the relations that exist between contact geometry and symplectic geometry. For symplectic manifold $(S,\omega)$, $L_v\omega=\omega$ for vector field $v$, where $L_v\omega$ is the Lie derivative of $\omega$ in the direction of $v$, and this is called a symplectic dilation. In $(S,\omega)$, compact hypersurface $M$ is said to have contact type if there exists a symplectic dilation $v$ in a neighbourhood of $M$ that is transverse to $M$. The characteristic line field $LM$ in the tangent bundle of given hypersurface $M$ in $(S,\omega)$, is the symplectic complement of $TM$ in $TS$. $M$ is therefore coisotropic since it is codimension one, and therefore the symplectic complement is contained in $TM$ and is of dimension one.
	
	For $M$ a compact hypersurface in a symplectic manifold $(S,\omega)$, with inclusion map $i:M\rightarrow M$, then $M$ has contact type if and only if a 1-form $\alpha$ exists on $M$ so that $d\alpha=i^*\omega$ and $\alpha$ is never zero on the characteristic line field.
	
	So if $M$ is a hypersurface of contact type, then $\alpha$ is found by contracting the symplectic dilation $v$ into the symplectic form i.e. $\alpha=\iota_v\omega$. It is easy to prove that the 1-form $\alpha$ is a contact form on $M$ and therefore a hypersurface of contact type in a symplectic manifold has a co-oriented contact structure.
	
	So provided a co-orientable contact manifold $(M,\xi)$, a symplectisation $\text{Symp}(M,\xi)=(S,\omega)$ can be formulated. Meaning that for manifold $S=M\times(0,\infty)$, given $\alpha$ a global contact form for $\xi$, then $\omega=d(t\alpha)$ where $t$ is the coordinate on $\mathbb{R}$. one may also symplectise as $(M\times\mathbb{R},d(e^t\alpha))$. The symplectisation is independent of the choice of $\alpha$.
	
	So for a co-oriented contact manifold $(M\xi)$, a symplectic manifold $\text{Symp}(M,\xi)$ exists in which $M$ sits as a hypersurface of contact type. and in fact, contact form $\alpha$ for $\xi$ provides an embedding of $M$ into this symplectic manifold that recognises $M$ as a hypersurface of contact type. Given $M$, a compact hypersurface of contact type in a $(S,\omega)$, with symplectic dilation $v$. Then there if a neighbourhood of $M$ in $S$ symplectomorphic to a neighbourhood of $M\times\{1\}$ in $\text{Symp}(M,\xi)$ where the symplectisation is simply $M\times(0,\infty)$ using $\alpha=\iota_v\omega|_M$ and $\text{ker}\alpha=\xi$.
	
	Let $(M,\xi)$ be a contact manifold. Associated to $\alpha$ to $\xi$ is the Reeb vector field $v_\alpha$ which is a unique vector field that satisfies $\iota_{v_\alpha}\alpha=1$ and $\iota_{v_\alpha}d\alpha=0$. $v_\alpha$ is certainly transverse to the contact hyperplanes and $\xi$ is preserved by the flow of $v_\alpha$. These two conditions characterise Reeb vector fields. Meaning that a vector field $v$ is the Reeb vector field for some contact form for $\xi$ if and only if it is transverse to $\xi$ and its flow preserves $\xi$. 
	
	Yi Lin and Reyer Sjamaar's result in \cite{LinSjamaar:2017dg} applies, to Elisa Prato's quasifolds and to Hamiltonians on contact manifolds and cosymplectic manifolds. In their paper they define a presymplectic manifold $(\mathcal{X},\omega)$ as a paracompact $C^\infty$-manifold that has a closed 2-form of constant rank, and a Hamiltonian action on a presymplectic manifold consists of a smooth action of a Lie group $G$ on $\mathcal{X}$ and a smooth moment map $\Phi:\mathcal{X}\rightarrow\mathfrak{g}^*$. Theorem 3.4.6 in their paper says that by assuming that the $G$ action on $\mathcal{X}$ is clean. By clean they mean of Reeb type. According to them cleanness means that there should exist an ideal of the Lie algebra of the groups, called the 'null ideal', which at every point of the manifold spans the tangent space of the intersection of the group orbit with the leaf of $\mathcal{F}$.
	S, if the action of clean then for every $\epsilon\in\mathfrak{g}$, $\Phi^\epsilon$ is the corresponding Morse-Bott function. And the positive and negative bundles of the critical set $\mathcal{X}^{[\epsilon]}$ with respect to the $G$-invariant compatible Riemannian metric on $\mathcal{X}$ - the symplectic subbundles of $T\mathcal{X}$ are orthogonal to the subbundle. Section 4.5 of that paper then introduces contact manifolds as one of the presymplectic manifolds to which their results apply on which the action of $G$ leaves $\alpha$ invariant. The Hamiltonian action is defined as $\Phi^\epsilon=\iota(\epsilon_\mathcal{X})\alpha$. The action is of Reeb type if there exists $\epsilon\in\mathfrak{g}$ with $\epsilon_\mathcal{X}=v_\alpha$, where $v_\alpha$ is the Reeb vector field introduced before. The Reeb type actions are leafwise transitive. The Reeb vector field spans the null foliation $\mathcal{F}$ of $\omega$.
	
	Lin and Sjamaar's presymplectic convexity theorem applied to contact structures, aptly named the 'Contact convexity theorem' part I - theorem 4.5.1 is that if the action of $G$ on $\mathcal{X}$ is clean, i.e. of Reeb type, then $\Phi(\mathcal{X})\cap C$ is a convex polytope denoted $\triangle(\mathcal{X})$. There exist many compact contact Hamiltonian $G$-manifolds $X$ where $\triangle(\mathcal{X})$ is not convex which implies that the action is not clean. 
	
	They propose several methods for examples, for a manifold $(\mathcal{X},\alpha)$, where $\triangle(\mathcal{X})$ is convex, if $\alpha$ is replaced with a conformally equivalent contact form $e^f\alpha$ where $f$ is a $G$-invariant smooth function i.e. $\Phi\times e^f$ then this destroys the convexity of $\triangle(\mathcal{X})$.
	And another a method for a failure of convexity: Zhenqi He gave the first example of a presymplectic Hamiltonian torus action with nonconvex momentum map in his PhD thesis titled 'Odd dimensional symplectic manifolds' completed at MIT. Such actions are not typically clean. Introducing a hypersurface $Y\subset\mathbb{R}^d$ that is transverse to the orthant faces of $\mathbb{R}^d$ and the radial vector field on $\mathbb{R}^d$. For a quadratic momentum map $\phi$ this implies that radial vector fields on $\mathbb{C}^d$ map to two times the radial vector fields on $\mathbb{R}^d$. For $\phi^{-1}Y$, it is transverse to the Liouville radial vector field on $\mathbb{C}^d$ which implies that $\phi^{-1}Y$ is of contact type. Only if $Y$ is an affine hyperplane does this imply that $\phi^{-1}Y$ is a contact ellipsoid and $\Phi(\phi^{-1}Y)$ is a simplex. Therefore $Y\cap\mathbb{R}^d_{\geq0}$ is not convex.
	Now for Lin and Sjamaar their results regard the symplectic leaf space $\mathcal{X}/\mathcal{F}$ of $\mathcal{X}$. For Chiang and Karshon's improvement \cite{ChiangKarshonLerman:2010dg} of Lerman's theorem in \cite{Lerman:2002dg}, their results regard the symplectisation as described before but where $\omega_S=-d(t\alpha)=t\omega-dt\wedge\alpha$ for $t\in(0,\infty)$. For trivial action of $G$, the Hamiltonian $G$-action on $S$ with $\Phi_S(x,t)=t\Phi(x)$ and the $\text{Im}(\Phi_S)$ is the conical set $\Phi_S(S)=\bigcup_{t>0}t\Phi(M)$. The subset is defined as $\text{Im}(\Phi_S)\cap C$ plus the origin: $\triangle(S)=\{0\}\cup(\Phi_S(S)\cap C)$ of $C$ meaning that $\triangle(S)=\bigcup_{t\geq0}t\triangle(M)$ i.e. it is the union of all dilations of $\triangle(S)$. \emph{Then} if $G$ is a torus of dimension greater than or equal to 2, this implies that $\triangle(S)$ is convex but $\triangle(M)$ is not. Another difference between \cite{LinSjamaar:2017dg} and \cite{ChiangKarshonLerman:2010dg} is that $\text{Im}(\Phi(M))$ is highly dependent on the choice of contact form $\alpha$. But the symplectic cone $(S,\omega_S)$ is intrinsic and invariant of the contact hyperplane bundle $\text{ker}(\alpha)$ which implies that $\triangle(S)$ depends on the conformal class of $\alpha$.
	
	However both these papers are not wholly disconnected from each other. The connection between theorem 4.5.1 of \cite{LinSjamaar:2017dg} and \cite{ChiangKarshonLerman:2010dg} is that the Reeb vector field $v_\alpha$ satisfies $\iota_{v_\alpha}\omega_S=-t\iota_{v_\alpha}d\alpha-\iota_{v_\alpha}(dt\wedge\alpha)=0+dt\wedge\iota_{v_\alpha}\alpha=dt$ i.e. it's the Hamiltonian vector field of $t$ on cone $S$. As $v_\alpha$ is $G$-invariant this implies that $S$ is a Hamiltonian $\hat{G}$-manifold for $\hat{G}=G\times\mathbb{R}$ and $\hat{\Phi}:S\rightarrow\mathfrak{g}^*\times\mathbb{R}$ where $\hat{\Phi}(x,t)=(t\Phi(x),t)$. If we have $\hat{\triangle}(S)=\{0\}\cup(\hat{\Phi}(S)\cap(C\times\mathbb{R}))\subset C\times\mathbb{R}$ and $\hat{\triangle}(S)=\bigcup_{t\geq0}(t\triangle(M)\times\{t\})$ and $\triangle(M)=\hat{\triangle}(S)\cap\{t=1\}$ as it should be! As the symplectic quotient of $S$ at the level 1 with respect to the $\mathbb{R}$-action is leaf space $X/\mathcal{F}$. Therefore $\triangle(S)$ is the restriction of $\hat{G}$-action to $G$ i.e. projecting $\hat{\triangle}(S)$ along the $\mathbb{R}$-axis. Therefore leading to a nonabelian extension of the Lerman-Chiang-Karshon theorem (theorem 4.5.3 of \cite{LinSjamaar:2017dg}) which states that $\hat{\triangle}(S)$ and $\triangle(S)$ are convex polyhedral cones for a clean $G$-action on $M$. This proof does not work for cone $\triangle(S)$ rational nor does it prove convexity if the action is not clean.
	
	Looking more closely at \cite{Lerman:2002dg} where the symplectic cone is the symplectisation of the contact manifold, $$\langle\Phi(q,p),\text{A}\rangle=\langle p,\text{A$_m$}(q)\rangle$$ where $\text{Am}$ is the vector field induced on $S$ by $\text{A}\in\mathfrak{g}$. The momentum cone $C(\Upsilon)=\Upsilon(\xi^\circ_+)\cup\{0\}$ with contact distribution $\xi$ and $\xi^\circ\subset T^*S$. The $G$-action preserves $\xi$ coorientation which implies that the two components of $\xi\setminus0$ is preserved. $\xi^\circ_+$ is one component. $\Upsilon=\Phi|_{\xi^\circ_+}$ where $\Upsilon:\xi^\circ_+\rightarrow\mathfrak{g}^*$. $\Upsilon_\alpha$ is the momentum  map defined by the contact form $\alpha$ and $\Upsilon$ is the momentum map defined by the contact distribution $\xi$. The proof of lemma 1.7 in \cite{Lerman:2002dg} uses the fact that $\text{dim}S+1=2\text{dim}G$ which implies that $\Upsilon_\alpha(x)\neq0$ for all $x\in S$. If we suppose not then for some point $x\in S$ ($\Upsilon_\alpha(x)=0$) the orbit $G\cdot x$ is tangent to the contact distribution. See remark 1.9 in that paper. It is a condition on a contact distribution and not on a particular choice of a contact form representing the distribution. Chiang and Karshon answer the question, is the transversality condition necessary?:
	``Eugene Lerman gave analogous theorem in equivariant contact geometry when torus orbits are transverse to the contact distribution, and asked whether transversality...in this paper we answer Lerman's question and give the 'convexity package'".
	The solution of which is in remark 1.2 in \cite{ChiangKarshonLerman:2010dg}. This paper is a step by step of several proofs for turning one thing into another - namely providing as they said a convexity package for contact structures.
	
	Boyer and Galicki in \cite{BoyerGalicki:1999dg} mainly prove Delzant's theorem: Every compact toric contact manifold whose Reeb vector field corresponds to an element of the Lie algebra of the torus (Reeb) can be procured by contact reduction from an odd dimensional sphere. For toric contact geometry, if a torus action is regular this means that the image is a sphere. And if the action is singular then the image is a closed convex poytope. According to Banyanga and Molino in CITE the polytope determines a contact toric structure up to isomorphism. But not that every such manifold can be procured from reduction (this may not be true). So if one makes the added assumption that the Reeb vector field corresponds to the element of the Lie algebra of the torus then it does become true. Their method, starts with the Atiyah-Guillemin-Sternberg theorem holding true for compact contact manifolds of Reeb type. Then if the contact 1-form is Pfaffian this shows that the convex polytope on certain hyperplane is the characteristic hyperplane in the dual of the torus. If we change the Pfaffian function, this changes the polytope by a scale. It is always possible to choose a Pfaffian so the polytope is rational. Now, according to Lerman and Tolman's theorem in CITE The geometry of their labelled polytopes are directly related to the geometry of the contact manifold with fixed Pfaffian with toris action and characteristic foliation. According to section 2 of that paper, if we fix the contact structure by an equivalence class this provides a Pfaffian structure. The proposition therein proposes that contact manifold $(S,\xi)$ is orientable if and only if $C(S)=S+\mathbb{R}^+$ is a symplectic cone. Fixed $\xi$ has a unique characteristic/Reeb vector field (as we know) but if $v_\alpha$ is nowhere vanishing then a foliation is induced that is defined $\mathcal{F}^{v_\alpha}$ of $S$. It is called the characteristic foliation ($v_\alpha$ and $\mathcal{F}^{v_\alpha}$ depend on a choice of $\xi$).
	
	In \cite{BoyerGalicki:1999dg}, $\mathfrak{C}(S,\mathcal{D})$ is the group of contact transformations (where $\mathcal{D}$ denotes the contact distribution in this case), a subset of $\text{Diff}(S)$ that leave $\mathcal{D}$ invariant. By fixing $\mathcal{D}=\text{ker}\xi$ which implies that $\mathfrak{C}$ is a subgroup $\phi:M\rightarrow M$ of diffeomorphisms with $\phi^*\xi=f\xi$ for $f\neq0$. What's wanted is a $\mathfrak{C}(M,\xi)$  for $\phi^*\xi=\xi$ with $\xi$ fixed. Which is called a strict contact transformation.
	
	For $\mathfrak{C}(S,\mathcal{D})$ and $\mathfrak{C}(S,\xi)$ the Lie algebras are $\mathfrak{c}(S,\mathcal{D})$ and $\mathfrak{c}(S,\xi)$ respectively. And the respective Lie algebras of the group of symplectomorphisms are $\mathfrak{s}(\mathfrak{c}(S,\omega))$ and $\mathfrak{s}_0(\mathfrak{c}(S,\omega))$. And on $C(S)$, $\mathfrak{G}(C(S),\omega)$ denote the group of symplectomorphisms of $(C(S),\omega)$, also $\mathfrak{G}_0(C(S),\omega)$ denote the subgroup of $\mathfrak{G}(C(S),\omega)$ that commutes with the homotheties meaning that it is the automorphism group of the symplectic Liouville structure. 
	
	They proposed that $\Phi(\mathcal{X})=\xi(\mathcal{X})$ where $\xi(\mathcal{X})$ the contact Hamiltonian function, and $\Phi\mathcal{X}$ the Lie algebra isomorphism between $\mathfrak{c}(S,\mathcal{D})$ of infinitesimal contact transformations. And $\mathcal{X}\mapsto\mathcal{X}_S\mapsto\xi(\mathcal{X}_S)$ implies that $\mathfrak{s}(C(S),\omega)\approx\mathfrak{c}(S,\xi)\approx C^\infty(S)^{v_\alpha}$ where $v_\alpha$ is the center of $\mathfrak{c}(S,\xi)$.
	
	The Atiyah-Guillemin-Sternberg theorem holds for compact contact manifolds of Reeb type. $\mathfrak{G}$ is the Lie group acting on symplectic cone $(C(S),\omega)$ leaves with invariant, commutes with homothety group. $\mathfrak{G}\subset\mathfrak{G}_0$ and $\tilde{\mu}:C(S)\rightarrow\mathfrak{g}^*$ where $\tilde{\mu}^\tau=\tilde{\xi}(\mathcal{X}_\tau)$. The homotheties are such that $r\mapsto e^tr$, $\omega\mapsto e^t\omega$, $\tilde{\xi}\mapsto e^t\tilde{\xi}$ and $\tilde{\mu}\mapsto e^t\tilde{\mu}$. Consider the $n+1$-dimensional torus, $\mathbb{T}^{n+1}$, $\{e_i\}^n_{i=0}$ is the standard basis for $\mathfrak{t}_{n+1}\simeq\mathbb{R}^{n+1}$ for $e_i$ have $\mathcal{X}^{e_i}$ denote $H_i$. $v_\alpha$ corresponds to $\zeta$, the characteristic vector, in $\mathfrak{t}_{n+1}$. So, the Reeb vector is almost periodic. So the torus action is of Reeb type. The dual basis of $\mathfrak{t}_{n+1}^*$ is $\{e^*_i\}^n_{i=0}$. $\langle \mu,\zeta\rangle=\sum^i_{i=0}\mu_ia_i=1$ the characteristic hyperplane of codimension 1 ($\zeta=\sum^n_{i=0}a_ie_i$, $\mu=\sum^n_{i=0}\mu_ie^*_i$).
	
	\begin{center}
		\begin{tikzpicture}
			\node (A) at (-5,0) {$C(S)$};
			\node (B) at (-5,-2) {S};
			\node (C) at (3,-2) {$\mathfrak{t}^*_{n+1}$};

			\draw [thick, ->] (A) -- (B);
			\draw [thick, ->] (A) -- (C) node[midway,above right] {$\tilde{\mu}$};
			\draw [thick, ->] (B) -- (C) node[midway,below] {$\mu$};
		\end{tikzpicture}
	\end{center}
	
	where the inclusion $S$ into $C(S)$ as $S\times{1}$.
	
	They defined the facet as a codimension one face. A dimension $n$ simple convex polytope - $n$ facets meeting at each vertex. And a rational convex polytope as $\Delta=\cap_{i=1}^N\{\alpha\in\mathfrak{t}^*|\langle\alpha,y_i\rangle\leq\lambda_i\}$ where $l\subset\mathfrak{t}$ is the lattice of the circle subgroups of $\mathbb{T}$, where $N$ is the number of faces.
	
	By the above, polytope $\mu(S)\subset\mathfrak{t}^*$ is simple, of dim $\mathfrak{t}-1$, and rational if and only if characteristic vector $\zeta$ lies in lattice $l\subset\mathfrak{t}$ of circle subgroups of $\mathbb{T}$.
	
	Theorem 4.5 of \cite{BoyerGalicki:1999dg} states for $x\in S$:\\
	(I) the naming and lettering of facets and vectors. And the polytope is the convex hull of vertices.\\
	(II) $\mathfrak{l}_x\in\mathfrak{h}_x$ (Lie algebra of isotropy subgroup $\mathbb{H}_x$ of $\mathbb{T}$ at $x$, is the linear span of vectors $p_i \forall i$ such that $i^\text{th}$ open facet $f_i$ lies in $F(x)$ (set of open faces)). $\mathfrak{l}_x$ denotes lattice of circle subgroups of $\mathbb{H}_x$ and $\hat{\mathfrak{l}}_x$ denotes sublattice of $l$ generated by $\{m_ip_i\}_{f_i\in F(x)}$ generates $\mathcal{l}_x$ $\forall x\in M$ (here $m_i=1$ $\forall i$).
	
	Then they discuss varying $\xi'=f\xi$ with a corresponding lemma.
	
	$(n+1)$-torus $\mathbb{T}^{n+1}$ $\zeta$ in $\mathfrak{t}_{n+1}$ $\mu_i=\xi(H_i)$. The polytope for $\xi$ and the polytope for $\xi'$, the same number of facets and vertices but size is different (depends on Pfaffian). But in $R\mathbb{P}^{n+1}$ lines through origin in $\mathfrak{t}_{n+1}$ intersect lattice $\mathcal{l}$ of circle subgroups are dense. So, perturb Reeb vector field and contact form. Which means that the foliation is quasi-regular and that the polytope is rational.
	After defining the toric contact manifold of Reeb type $(S,\mathcal{D},\mathbb{T})$ they explained the proposition that $(S,\mathcal{D},\mathbb{T})$ implies that $\mathcal{D}$ has quasi-regular contact form i.e. rational polytope. 
	
	So, for a $(S,\xi)$ a compact toric contact manifold of Reeb types and fixed quasi-regular contact form $\eta$, then $(S,\xi)\simeq$ reduction by a torus of $S^{2n+1}$ sphere with $\xi_a$. The proof involves looking at the space of leaves $\mathcal{Z}$ of characteristic foliation $\mathcal{F}$. For $M$ toric this implies that $\mathcal{Z}$ is toric which implies $\mathbb{T}^n$ preserving $\omega$ on $\mathcal{Z}$. The integral class in $H^2_{orb}(\mathcal{Z},\mathbb{Z})$ ($S^1$ $V$-bundle $S\xrightarrow{\pi}\mathcal{Z})$. Chern class $d\xi=\pi^*\omega$ implies $H^2_{orb}(\mu^{-1}_{N-n}(n)/\mathbb{T}^{N-n},\mathbb{Z})$ ($S^1$ $V$-bundle $\pi:p\rightarrow \mu^{-1}_{N-n}(\lambda)\setminus\mathbb{T}^{N-n})$. With $\tilde{\pi}:P\rightarrow\mu^{-1}_{N-n}(\lambda)\setminus\mathbb{T}^{N-n}$ and $\phi:S\rightarrow P$ and $\tilde{\phi}\mathcal{Z}\rightarrow\mu^{-1}_{N-n}(\lambda)\setminus\mathbb{T}^{N-n}$ so $\phi^*d\hat{\xi}=d\xi$ means we can choose $\phi^*\hat{\xi}=\xi$. $\mathbb{T}^{N-n}=S^1_\zeta\times\mathbb{T}^{N-n-1}$ $P=\mu^{-1}_{N-n}(\lambda)/\mathbb{T}^{N-n-1}$ so $(S,\xi)$ becomes $(\mu^{-1}_{N-n}(\lambda)\setminus\mathbb{T}^{N-n-1},\hat{\xi})$. $\sum_a=\{\sum_ia_i|z_i|^2\}\simeq S^{2n+1}$. Defining $p:\mu^{-1}_{N-n}(\lambda)\rightarrow S$, so $p^*\xi=\iota^*\xi_a$. So $(S,\xi)$ is obtained from $(\sum_a,\xi_a)$ by contact reduction.
	
	Their main theorem is that for a compact toric contact manifold of Reeb type with a fixed quasi-regular contact form, the manifold is isomorphic to the reduction by a torus of a $2n-1$ dimensional sphere with its standard contact structure and with fixed 1-form (of particular Reeb type). Where every such manifold admits a compatible Sasakian structure. As every symplectic toric orbifold is in fact K\"{a}hler.
	
	Now not every contact vector field is the Reeb field of some contact form. Indeed, $v_\alpha$ is the Reeb field of some contact form which defines $\xi$ if and only if $v_\alpha$ is transverse to $\xi$, i.e. $\alpha(v_\alpha)\neq0$ for any defining form $\alpha$. 
	
	According to \cite{ChiangKarshonLerman:2010dg} a convex cone is defined as $C(\Psi):=\{0\}\cup\Psi(S\times\mathbb{R}_{>0})$ $S$ compact connected cooriented contact manifold equipped with an effective action of a torus of dimension $k>2$ and let $\Psi:S\times\mathbb{R}_{>0}\rightarrow\mathbb{R}^k$ is the momentum map on the symplectisation. Their symplectisation of $(S,\alpha)$ is the symplectic manifold $(S\times\mathbb{R}_{>0},d(t\alpha))$ where $t$ is the coordinate on $\mathbb{R}_{>0}$ and $\alpha$ is the contact one form on $S$ and its pullback to $S\times\mathbb{R}_{>0}$. The nondegeneracy of $d(t\alpha)$ follows from the property $\alpha\wedge(d\alpha)^n\neq0$. The positive connected component of the annihilator of $\xi$ on cotangent bundle $T^*M$ is (similar to before) $\xi^0_+:=\{(x,\beta)|x\in S,\ \beta\in T^*_xM,\ |\beta(\xi|_x)=0, \beta\ \text{induces coorientation of}\ \xi|_x\}$ and map $S\times\mathbb{R}_{>0}\rightarrow\xi^0_+$ sends $(x,t)$ to $t\alpha_x$. 
	
	In general the manifold $(Q\times\mathbb{R}_{>0},d(e^\theta\alpha))$ is called a symplectisation of $S$, where $\theta$ denotes the coordinate on $\mathbb{R}$. Every (transversally orientable) contact manifold $(Q,\xi)$ can be embedded as a hypersurface in an exact symplectic manifold. Choose any contact form $\alpha$. Then $M=Q\times\mathbb{R}$ is a symplectic manifold with symplectic form $\omega=e^\theta(d\alpha-\alpha\wedge d\theta)=d\lambda$, where $\lambda=e^\theta\alpha$.
	
	Going back to \cite{ChiangKarshonLerman:2010dg} they introduce momentum map $\Psi_\alpha: S\rightarrow\mathfrak{t}^*$ whose $\mathfrak{x}$ component $\Psi_\alpha^\mathfrak{x}:S\rightarrow\mathbb{R}$ for $\mathfrak{x}\in\mathfrak{t}$ is: $\Psi_\alpha^\mathfrak{x}(s)=\alpha(\mathfrak{x}_S(s))$ $\forall s\in S$ satisfying $d\Psi^\mathfrak{x}_\alpha=-\iota(\mathfrak{x}_S)d\alpha$ on $S$. Using $\alpha$ to identify $\xi^0_+$ with $S\times\mathbb{R}_{>0}$, induced torus action on $S\times\mathbb{R}_{>0}$ is the given action on $S$ component, which is trivial on the $\mathbb{R}_{>0}$ component. $\Psi:S\times\mathbb{R}_{>0}\rightarrow\mathfrak{t}^*$ with $(s,t)\mapsto t\Psi_\alpha(s)$ which is contact momentum map corresponding to $\alpha$. So the momentum cone is $C(\Psi)=\{0\}\cup\Psi(S\times\mathbb{R}_{>0})=\mathbb{R}_{\geq0}\cdot\Psi_\alpha(S)$.
	
	\cite{LinSjamaar:2017dg} state that the cone $C(\mathcal{X})$ is obtained by restricting the $\hat{G}$-action (defined above) to $G$, i.e. by projecting $\hat{C}(S)$ along the $\mathbb{R}$ axis. Where for $C(S)$ to be a convex polytope, $\hat{C}(S)$ is the convex polyhedral cone and its projection $C(S)$ onto $\mathfrak{t}^*$is a convex polyhedral cone. $C(\mathcal{X})=\Phi(\mathcal{X})\cap$positive Weyl chamber. $\hat{C}(S)=\bigcup_{t\geq0}(tC(\mathcal{X})\times\{t\})$ and $C(S)=\bigcup_{t\geq0}tC(\mathcal{X})$ and $\Phi_S(x,t)=t\Phi(x)$.
	
	Now $C(\mathcal{X})$ is rational if and only if the null subgroup $N(\mathcal{X})$ of $G$ is closed. $\langle \xi,\cdot\rangle\geq a$ is the half space or character lattice. According to \cite{Lerman:2002dg} a cone or polytope is rational if annihilators of codimension one faces are spanned by integral lattice i.e. vectors if and only if the limit of the kernel of exp$:\mathfrak{g}\rightarrow G$.
	
	Section 2.7 of \cite{LinSjamaar:2017dg} says that irrationality ker$(\omega)=0$ foliation is trivial but weakly rational i.e. normal vectors are contained in quasi-lattice $\mathfrak{X}_*(\mathbb{T})/\mathfrak{X}_*(\mathbb{T}\cap\mathfrak{n}(\mathcal{X}))$ in quotient space $\mathfrak{t}/\mathfrak{n}(\mathcal{X})$ where $\bar{x}=\mathcal{F}(x)$ and $G_{\bar{x}}$ is the stabiliser of $\bar{x}$ with corresponding Lie algebra $\mathfrak{g}_{\bar{x}}$ and $\mathfrak{n}(U)=\bigcap_{x\in G\cdot U}\mathfrak{g}_{\bar{x}}$ 
	for all open $U\subseteq\mathcal{X}$. $\mathfrak{n}(\mathcal{X})\cap\mathfrak{t}$ of $\mathfrak{t}$ is rational so subspace $\mathfrak{n}^*(\mathcal{X})^\circ\cap\mathfrak{t}^*$ is rational. By this, $C(\mathcal{X})$ is rational and $C(\mathcal{X})=\bigcap_{x\in\mathcal{X}}C_x\cap\mathfrak{n}(\mathcal{X})^\circ$. Remark 1.3 of Lerman's paper proves rationality - it is the main result of his paper, \cite{Lerman:2002dg}.
	And Banyango and Molina in \cite{BanyangaMolino:2011dg} prove rationality in Lemma 2.2 and Remark 2.3 of their paper, stating that for $W\subset S$ is a rational polyhedral if there exist vectors $v_1,\ldots,v_k$ in integral lattice $\mathbb{Z}_G=\text{ker}\{exp:\mathfrak{g}\rightarrow G\}$ such that $W=\{f\in S|\langle f,v_i\rangle\geq0,\ 1\leq i\leq k\}$.

	There are other papers that also tussle with the image of the momentum map of an action on a contact structure. More recently T. Ratiu and N. T. Zhung write in their preprint titled "Presymplectic convexity and (ir)rational polytope" that: "Battaglia and Prato and Katzarkov-Lupercio-Meerseman-Verjovsky have worked on irrational analogues of symplectic toric manifolds. However we wanted to have a simpler understanding of the geometric structure and so developed our own approach, which uses presymplectic realisations".
	
	%	\subsection{(lifting an irrational cone to a rational polytope}
	
	%	We first note that Lin/Sjaaar's result says that their proof of theorem 4.5.3 of their paper \cite{LinSjamaar:2017dg} "does not enable us to show that the cone $\Delta(X)$ is rational.." and by Ratiu/Zung in \cite{RatiuZung:2017dg} we have that "In order to turn an irrational convex polytope into a momentum polytope of a (pre)symplectic toric object, one first needs to lift it \emph{non-isomorphically} to a \emph{rational-faced} polytope in a higher dimensional space! This simple but important observation clarifies the role of irrational polytopes in toric (pre)symplectic geometry.")

	\section{Background}

	\subsection{Toric Varieties}
	
	%\lipsum[5] % Dummy text
	\subsubsection{Introduction}
	
	Toric varieties are geometric objects defined by combinatorial information. 
	
	A lattice $N\simeq \mathbb{Z}^n$, and real vector space $N_\mathbb{R}=N\otimes_\mathbb{Z}\mathbb{R}$. For a strongly convex rational polyhedral cone $\sigma$ in $N_\mathbb{R}$, rationality means that it is generated by a finite number of vectors and a convexity is strong if it doesn't contain a line through the origin. So $\sigma$ is defined as a cone in $N_\mathbb{R}$ that's generated by vectors in the lattice with apex at the origin. A fan $\Delta$ in $N$ is a collection of $\sigma$ where every face of a cone in $\Delta$ is also a cone in $\Delta$ and the intersection of two cones in $\Delta$ is a face of each.
	
	The dual cone denoted $\sigma^\smallsmile$ in $M_\mathbb{R}$, where $M=\text{Hom}(N,\mathbb{Z})$ is the dual lattice (with dual pairing $\langle,\rangle$), is the set of vectors that are nonnegative in $\sigma$. A finitely generated commutative semigroup is therefore established as $S_\sigma=\sigma^\smallsmile\cap M=\{u\in M:\langle u,v\rangle\geq0\ \text{for all}\ v\in\sigma\}$ where $\mathbb{C}[S_\sigma]$ is its corresponding group algebra - a finitely generated commutative $\mathbb{C}$-algebra. $U_\sigma=$Spec$(\mathbb{C}[S_\sigma])$ is the affine variety.
	
	For a face of $\sigma$, $\tau$ - the commutative semigroup $S_\tau$ contains $S_\sigma$ and $\mathbb{C}[S_\sigma]$ is a subalgebra of the associated group algebra $\mathbb{C}[S_\tau]$, and the affine variety $U_\tau$ is a principal open subset of $U_\sigma$, for $u\in S_\sigma$- so $\tau=\sigma\cap u^\perp$ then $U_\tau=\{x\in U_\sigma:u(x)\neq0\}$ giving the map $U_\tau\rightarrow U_\sigma$. These affine varieties form the algebraic variety $X(\Delta)$. Any product of projective and affine varieties is a toric variety. $T$ the torus of algebraic groups $\mathbb{C}^*\times...\times\mathbb{C}^*$ - a normal variety $X$ contains $T$ as a dense open subset and $T\times X\rightarrow X$. The simplest compact example is indeed the projective space $(\mathbb{C}^*)^n\subset\mathbb{C}^n\subset\mathbb{P}^n$.
	
	Now smaller cones mean smaller open sets therefore the geometry in $N$ is preferred to the geometry in $M$. 
	
	\subsubsection{Convex polyhedral cones}
	
	Next we will cover how to find generators of the semigroups. If we denote $N_\mathbb{R}$ as $V$ and its dual $V^*=M_\mathbb{R}$, then the convex polyhedral cone is the set $\sigma=\{r_1v_1+...+r_sv_s\in V:r_i\geq0\}$ generated by any finites set of vectors $v_1,...,v_s$ in $V$. The generators for the cone $\sigma$ are the vectors $v_i$ or positive multiples of $v_i$ called rays. The dimension of the linear space $\mathbb{R}\cdot\sigma=\sigma+(-\sigma)$ spanned by $\sigma$, gives the dimension of $\sigma$ (dim$\sigma$).
	
	For any set $\sigma$ there is a corresponding dual set $\sigma^\smallsmile$, and it is the set of equations of supporting hyperplanes, $\sigma^\smallsmile =\{u\in V^*:\langle u,v\rangle\geq0\ \text{for all}\ v\in\sigma\}$. If $v_0\notin\sigma$ in the case that $\sigma$ is a convex polyhedral cone, then there exists some $u_0\in\sigma^\smallsmile$ where $\langle u_0,v_0\rangle<0$. Therefore $(\sigma^\smallsmile)^\smallsmile=\sigma$ (duality theorem by direct translation of the previous sentence) and other conditions pertaining to the facets also follow.
	
	If $\sigma\cap(-\sigma)=\{0\}$, $\sigma$ does not contain a nonzero linear subspace, $\sigma^\smallsmile$ spans $V^*$, and a $u$ in $\sigma^\smallsmile$ exists such that $\sigma\cap u^\top=\{0\}$, then $\sigma$ is called a strongly convex cone. 
	
	\subsubsection{Affine toric varieties}
	
	For $\sigma$ is a strongly convex rational polyhedral cone, $S_\sigma=\sigma^\smallsmile\cap M$ is a finitely generated semigroup and any additive semigroup $S$ determines a group ring, $\mathbb{C}[S]$, that is a commutative $\mathbb{C}$-algebra. 
	
	As $u$ varies over $S$, the basis of this complex vector space, $\chi^u$, has multiplication given by addition in $S$, i.e. $\chi^u\cdot\chi^{u'}=\chi^{u+u'}$. Where $\chi^0$ is the unit 1. The generators $\{\chi^{u_i}\}$ for the $\mathbb{C}$-algebra are determined by the generators $\{u_i\}$ for the semigroup $S$.
	
	If we use $A$ to denote any finitely generated commutative $\mathbb{C}$-algebra, $A$ determines a complex affine variety Spec$(A)$. If generators of $A$ are specified, then for $I$ an ideal, $A$ is presented as $\mathbb{C}[X_1,\dots,X_m]/I$; and the subvariety $V(I)$ of affine space $\mathbb{C}^m$ of common zeros of the polynomials in $I$ identifies Spec$(A)$. Therefore $A$ is the domain, Spec$(A)$ is an irreducible variety, and even though Spec$(A)$ officially includes all prime ideals of $A$ (corresponding to subvarieties of $V(I)$), a point in Spec$(A)$ is taken as an ordinary closed point corresponding to the maximal ideal, which is denoted Specm$(A)$. Any homomorphism $A\rightarrow B$ determines a morphism of varieties Spec$(B)\rightarrow$ Spec$(A)$, and $\mathbb{C}$-algebra homomorphisms from $A$ to $\mathbb{C}$. When $X=\text{Spec}(A)$ for each element $f\in A$ with $f\neq0$, then according to the localisation homomorphism $A\mapsto A_f$, $X_f$ is the principal open subset, defined $X_f=\text{Spec}(A_f)\subset X=\text{Spec}(A)$.
	
	When $\mathbb{C}=\mathbb{C}^*\cup\{0\}$ is taken as an abelian group via multiplication, and $A=\mathbb{C}[S]$ - constructed from a semigroup, then the points simply correspond to homorphisms of the semigroup from $S$ to $\mathbb{C}$, i.e. $\text{Specm}(\mathbb{C}[S])=\text{Hom}_{sg}(S,\mathbb{C})$.  For a semigroup homomorphism, the value of the corresponding function $\chi^u$ at the specific point of $\text{Specm}(\mathbb{C}[S])$, where $x$ denotes the semigroup homomorphism map from $S$ to $\mathbb{C}$ is the image of $u$ by the map $x$, $\chi^u(x)=x(u)$. And $S=S_\sigma$ is apparent from a strongly convex rational polyhedral cone. We get $A_\sigma=\mathbb{C}[S_\sigma]$ and affine toric variety $U_\sigma=\text{Spec}(\mathbb{C}[S_\sigma])=\text{Spec}(A_\sigma)$ when $S=S_\sigma$ because the polyhedral cone is strongly convex and rational.
	
	For a strongly convex rational polyhedral cone, $S=S_\sigma$, setting $A_\sigma=\mathbb{C}[S_\sigma]$ and $U_\sigma=\text{Spec}(\mathbb{C}[S_\sigma])=\text{Spec}(A_\sigma)$ the corresponding affine toric variety.
	
	Of the group $M=S_{\{0\}}$ all of the above semigroups will be sub-semigroups, so that if $e_1,\dots,e_n$ is a basis for $N$ and $e_1^*,\dots,e_n^*$ is the dual basis of $M$, rename $X_i=\chi^{e_i^*}\in\mathbb{C}[M]$. The generators of semigroup $M$ are $\pm e_1^*,\dots,\pm e_n^*$, and therefore, $\mathbb{C}[M]=\mathbb{C}[X_1,\dots,X_n]_{X_1\cdot\dots\cdot X_n}$ which is a ring of Laurent polynomials in $n$ variables. $\text{Spec}(\mathbb{C}[M])=U_{\{0\}}\simeq\mathbb{C}^*\times\mathbb{C}^*=(\mathbb{C}^*)^n$ is an affine algebraic torus.
	
	All subgroups will be considered to be sub-semigroups of a lattice $M$, therefore $\mathbb{C}[S]$ will be the subalgebra of $\mathbb{C}[M]$, and specifically $\mathbb{C}[S]$ is the domain. Elements of $\mathbb{C}[S]$ are written as Laurent polytnomials in variables $X_i$ when the basis of $M$ is as above. In other words, monomials in the variables $X_i$ generate all those algebras.
	
	The torus $T_N$ that corresponds to $M$ or $N$, can be written $T_N=\text{Spec}(\mathbb{C}[M])=\text{Hom}(M,\mathbb{C}^*)=N\otimes_\mathbb{Z}\mathbb{C}^*$. 
	
	A homomorphism between semigroups $S$ and $S'$ implies a morphism of the affine varieties $\text{Spec}(\mathbb{C}[S'])\rightarrow\text{Spec}(\mathbb{C}[S])$. If $\sigma$ contains $\tau$, taking $u\in S_\sigma=\sigma^\smallsmile\cap M$, $\tau$ is a rational convex polyhedral cone that is equal to $\sigma\cap u^\perp$, and therefore every face, $\tau$ of $\sigma$ can be written $S_\tau=S_\sigma+\mathbb{Z}_{\geq0\cdot(-u)}$. Therefore every basis element of the algebra of $S_\tau$ can be written as $\chi^{w-pu}=\chi^w/(\chi^u)^p$, where $w\in S_\sigma$, and therefore $A_\tau=(A_\sigma)_{\chi^u}$.
	
	It follows that $S_\sigma$ is a subgroup of $S_\tau$, implying the morphism $U_\tau\rightarrow U_\sigma$ which embeds $U_\tau$ as a principal open subset of $U_\sigma$.
	
	A homomorphism $\varphi$ of lattices $N$ and $N'$ so that $\varphi_\mathbb{R}$ maps the cone $\sigma'$ in $N'$ into a cone $\sigma$ in $N$.Its dual map $\varphi'$ maps $S_\sigma$ to $S_\sigma'$ establishing homomorphism $A_\sigma\rightarrow A_{\sigma'}$ and so the morphism from $U_{\sigma'}$ to $U_\sigma$.
	
	Therefore $S_\sigma$ is saturated and $M=S_\sigma+(-S_\sigma)$. For $S_\sigma$ generated by $u_1,\ldots,u_t$, $I$ is generated by polynomials $Y_1^{a_1}\cdot Y_2^{a_2}\cdot\ldots\cdot Y_t^{a_t}-Y_1^{b_1}\cdot Y_2^{b_2}\cdot\ldots\cdot Y_t^{b_t}$ where $a_i, b_i\in\mathbb{R}_{\geq0}$ and $a_1u_1+\ldots a_tu_t=b_1u_1+\ldots b_tu_t$ and so $$A_\sigma = \mathbb{C}[\chi^{u_1},\ldots,\chi^{u_t}] = \mathbb{C}[Y_1,\ldots,Y_t]/I$$.
	
	%%%section might need rewriting - too close to original
	
	The torus $T_N$ acts on $U_\sigma$, $T_N\times U_\sigma\rightarrow U_\sigma$ for some $\sigma$ in $N$. The product $t\cdot x$ is a map of semigroups $S_\sigma\rightarrow\mathbb{C}$: $u\mapsto t(u)x(u)$. Where $t\in T_N$ corresponds with the map $M\rightarrow\mathbb{C}^*$, and in turn $x\in U_\sigma$ with map $S_\sigma\rightarrow\mathbb{C}$. In fact, for $u\in S_\sigma$ the map $\chi^u$ to $\chi^u\otimes\chi^u$ gives the dual map on algebras $\mathbb{C}[S_\sigma]\rightarrow\mathbb{C}[S_\sigma]\otimes\mathbb{C}[M]$, and is the usual product of groups when $\sigma=\{0\}$, is in agreement with the inclusion of open subsets that are associated with faces of the cone, and expends to the action of $T_N$ on itself.
	
	\subsubsection{Fans and toric varieties}
	
	Henceforth all cones will be rational strongly convex polyhedral cones. And fans are finite. To assemble $X(\Delta)$ from fan $\Delta$, is to take the disjoint union of affine toric varieties $U_\sigma$, one for each $\sigma$ in $\Delta$. A gluing aspect is also performed: $\sigma\cap\tau$ is a face for each cone $\sigma$ and $\tau$. This implies that $U_{\sigma\cap\tau}$ is the principal open subvariety of $U_\sigma$ and $U_\tau$. That $U_{\sigma\cap\tau}\rightarrow U_\sigma\times U_\tau$ is a closed embedding implies the map $A_\sigma\otimes A_\tau\rightarrow A_{\sigma\cap\tau}$ is surjective since $S_{\sigma\cap\tau}=S_\sigma+S_\tau$. Where, of course, $U_{\sigma\cap\tau}=U_\sigma\cap U_\tau$.
	
	Any product of affine and projective spaces can be realised as a toric variety. 
	
	\subsubsection{Toric varieties from polytopes}
	
	Once the vertices of the polytope are established, then the fan $\Delta$ and then the toric variety. 
	A very important method of assembly begins with a rational polytope $P$ in $M_\mathbb{R}$ that is $n$-dimensional and doesn't necessarily contain the origin. The corresponding fan $\Delta_P$ is assembled by taking, for each face $Q$ of $P$ its cone $\sigma_Q=\{v\in N_\mathbb{R}:\langle u,v\rangle\leq\langle u',v\rangle\ \text{for all}\ u\in Q\ \text{and}\ u'\in P\}$.

	\subsection{Contact Manifold}
	
	Recalling that Toric varieties are geometric objects defined by combinatorial information: complex contact manifolds arise naturally in differential geometry, algebraic geometry and exterior differential systems. The geometry of such a manifold is governed by the contact lines contained in it. These are related to the notion of a variety of minimal rational tangents.
	
	\begin{definition}
		A contact structure on a $(2n+1)$-dimensional differentiable manifold $M$ is a smooth codimension 1 distribution (i.e. tangent hyperplane field) $\xi\in TM$ that is maximally non-integrable in the following sense: We require that $\xi$ be a locally defined as $\xi=\text{ker}\alpha$ with a differential 1-form $\alpha$ that satisfies $\alpha\wedge(d\alpha)^n\neq0$ on its whole domain of definition, If such a 1-form exists globally on $M$, it is called a contact form on $M$. A manifold with a contact structure is called a contact manifold.
	\end{definition}
	
	\begin{exmp}
		The standard contact structure on $\mathbb{R}^{2n+1}$ is the kernel of $\alpha=dz-\sum_{i=1}^ny_idx_i$ for coordinates $(x_1,y_1,\ldots,x_n,y_n,z)\in\mathbb{R}^{2n+1}$. 
		If we look at the standard contact structure in $\mathbb{R}^3$ $(\mathbb{R}^3,\text{ker}(dz+xdy))$, the conditions would be that $\alpha=dz+xdy$, so $d\alpha=dx\wedge dy$ and certainly $\alpha\wedge d\alpha\neq0$. Plotting this contact plane, the 1-form $\alpha$ is always tangent to the vector $\frac{\partial}{\partial x}\in\text{ker}(\alpha)$; and this is further improved by again looking at the kernel, giving us $x\frac{\partial}{\partial z}-\frac{\partial}{\partial y}$, so it is spanned by these two vectors $\frac{\partial}{\partial x}$ and $x\frac{\partial}{\partial z}-\frac{\partial}{\partial y}$.	
	\end{exmp}

	\begin{theorem} [Darboux's theorem] Let $\alpha$ be a contact form on the $(2n+1)$-dimensional manifold $M$ and $p$ a point in $M$. Then there are coordinates $(x_1,\ldots,x_n,y_1,\ldots,y_n,z)$ on a neighbourhood $U\subset M$ of $p$ such that $$\alpha|_U=dz+\sum_{j=1}^nx_jdy_j$$.
	\end{theorem}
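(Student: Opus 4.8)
The plan is to prove this by Moser's deformation (homotopy) method: reduce the global identity to a pointwise linear normal form at $p$, and then absorb the remaining difference between the two $1$-forms by the flow of a well-chosen time-dependent vector field. For the pointwise step, note that since $\alpha\wedge(d\alpha)^n\neq0$ the hyperplane $\ker\alpha_p\subset T_pM$ carries the nondegenerate alternating form $(d\alpha)_p|_{\ker\alpha_p}$ — a linear symplectic form — and $T_pM=\ker\alpha_p\oplus\mathbb{R}\,R_p$, where $R$ is the Reeb field of $\alpha$. Applying the linear Darboux theorem for symplectic vector spaces to $\big(\ker\alpha_p,(d\alpha)_p\big)$ and taking the $z$-axis to span $\mathbb{R}R_p$, I obtain linear coordinates $(x_1,\dots,x_n,y_1,\dots,y_n,z)$ centred at $p$ in which, with $\alpha_1:=dz+\sum_{j=1}^n x_j\,dy_j$, one has $(\alpha)_p=(\alpha_1)_p$ and $(d\alpha)_p=(d\alpha_1)_p$.

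Now set $\alpha_0:=\alpha$ and interpolate, $\alpha_t:=(1-t)\alpha_0+t\alpha_1$ for $t\in[0,1]$. Then $(\alpha_t)_p=(\alpha_0)_p$ and $(d\alpha_t)_p=(d\alpha_0)_p$ for all $t$, so $\big(\alpha_t\wedge(d\alpha_t)^n\big)_p\neq0$; since this is an open condition, after shrinking $U$ each $\alpha_t$ is a contact form on $U$, with Reeb field $R_t$, and in fact $R_t(p)$ is independent of $t$ because it depends only on the ($t$-independent) values $(\alpha_t)_p,(d\alpha_t)_p$. Also $\dot\alpha_t:=\frac{d}{dt}\alpha_t=\alpha_1-\alpha_0$ does not depend on $t$ and has vanishing $1$-jet at $p$ (it vanishes at $p$ together with its exterior derivative). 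I then seek an isotopy $\psi_t$ of a neighbourhood of $p$ with $\psi_0=\mathrm{id}$ and $\psi_t(p)=p$, generated by a time-dependent vector field $X_t$, such that $\psi_t^*\alpha_t$ is constant in $t$; evaluating at $t=0$ and $t=1$ this gives $\psi_1^*\alpha_1=\alpha_0=\alpha$, which is the assertion once $\psi_1$ is reinterpreted as a change of coordinates. Differentiating and using Cartan's formula, the condition on $X_t$ is
\[
  d\big(\iota_{X_t}\alpha_t\big)+\iota_{X_t}\,d\alpha_t+\dot\alpha_t=0 .
\]

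To solve this I write $X_t=h_tR_t+Y_t$ with $Y_t\in\ker\alpha_t$. Because $\iota_{R_t}\alpha_t=1$ and $\iota_{R_t}d\alpha_t=0$, pairing the equation with $R_t$ yields the scalar transport equation $R_t(h_t)=-\dot\alpha_t(R_t)$, while pairing with vectors in $\ker\alpha_t$ yields $\iota_{Y_t}d\alpha_t\big|_{\ker\alpha_t}=-(dh_t+\dot\alpha_t)\big|_{\ker\alpha_t}$, which determines $Y_t$ uniquely since $d\alpha_t$ is nondegenerate on $\ker\alpha_t$. I solve the transport equation for $h_t$ by flowing out from a fixed hypersurface through $p$ transverse to $R_t(p)$ — the same hypersurface for every $t$, by the previous remark — prescribing $h_t\equiv0$ on it; since $\dot\alpha_t$ vanishes at $p$, this gives $h_t(p)=0$ and $dh_t(p)=0$, hence also $Y_t(p)=0$, so $X_t(p)=0$ and the flow of $X_t$ fixes $p$.

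The step I expect to be the main obstacle is the completeness of this isotopy: a priori the flow of a time-dependent vector field exists only for short time, so one must show that, after shrinking $U$ once more, the flow of $X_t$ is defined on $U$ for all $t\in[0,1]$. This is exactly where the vanishing of the $1$-jet of $\alpha_1-\alpha_0$ at $p$ does the work — it forces $X_t(p)=0$, so integral curves that start near $p$ stay near $p$ throughout $[0,1]$ and the flow survives to time $1$. The only remaining technicalities are the smooth dependence of $R_t$, $h_t$, $Y_t$ on $t$ (routine, since $\alpha_t$ is affine in $t$ and extracting the Reeb field, inverting $d\alpha_t|_{\ker\alpha_t}$, and solving the transport equation all preserve parameter-smoothness) and the standard verification that $\frac{d}{dt}\big(\psi_t^*\alpha_t\big)=\psi_t^*\big(\mathcal{L}_{X_t}\alpha_t+\dot\alpha_t\big)$.
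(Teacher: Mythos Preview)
Your Moser-type argument is correct and is in fact the standard modern proof of the contact Darboux theorem. The pointwise linear normalization, the affine interpolation $\alpha_t$, the decomposition $X_t=h_tR_t+Y_t$, and the transport equation for $h_t$ along the Reeb flow are all handled properly; in particular your observation that $\dot\alpha_t$ has vanishing $1$-jet at $p$ is exactly what forces $X_t(p)=0$ and makes the time-$1$ flow exist on a fixed neighbourhood of $p$.

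There is nothing to compare against, however: the paper states Darboux's theorem purely as a classical background fact and offers no proof of it. The theorem is quoted in the section on contact manifolds and the text moves on immediately afterwards. So your proposal supplies an argument where the paper simply cites the result; there is no ``paper's own proof'' to set yours beside.
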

	
	One can easily find the respective duals $v_i^*$ to $v_1$ and $v_2$ that construct the dual vector space $M$ with respect to the dual pairing $\langle,\rangle$ (as an example one can find the duals corresponding to the standard contact structure). And therefore one can create a variety of minimal rational tangents.
	
	\begin{definition} Let $X$ be a normal variety, $U=X-\text{Sing}X$ and $j:U\rightarrow X$ the natural embedding; the sheaf of differential p-forms or of p-differentials (in the sense of Zariski-Steenbrink) to be $\Omega^p_X=j_*(\Omega^p_X)$.
	\end{definition}
	
	%In the case of a nonsingular variety over $\mathbb{C}$, the sheaf of differentials is almost the same as the cotangent bundle defined in complex differential geometry. In other words it corresponds to....(Week 6 document on phone downloads) 
	
	Let $\mathfrak{U}$ denote the ideal generated by all monomials defined before. The sheaf $\Omega_X^p$ on the toric variety $X$ corresponds to a certain $\mathbb{C}[M]$-module. $\Omega_A^p$ is the $N$-graded $\mathbb{C}$ vector space $\Omega^p_A$ as defined in \cite{Danilov:1978dg} is isomorphic to $\mathfrak{U}$.
	
	\section{proof of theorem}
	
	A \emph{contact structure} on a smooth algebraic variety is the date of a subbundle $D\subset\mathcal{T}_X$ of rank $\text{dim}X-1$ such that the $\mathcal{O}_X$-bilinear form on $D$ with value in the line bundle $L=\mathcal{T}_X/D$ induced by the Lie bracket on $\mathcal{T}_X$ is non degenerate in each points of $X$. This implies that $X$ has odd dimension $2n+1$ and that the canonical bundle $K_X$- is isomorphic to $L^{-1-n}$. We can also define the contact structure by the data of an element $\theta\in H^0(X,\Omega_X^1\otimes L)$, the \emph{contact form}, such that $\theta\wedge(d\theta)^n$ is everywhere nonzero.
	
	Complex projective spaces and varieties $\mathbb{P}_Y(\mathcal{T}_Y)$, with $Y$ a smooth variety, are examples of contact varieties. 
	
	In short, a \emph{toric variety} is an equivariant compactification of a torus \cite{Danilov:1978dg}, \cite{Fulton:1993dg}, \cite{Oda:1978dg}, \cite{Oda:1988dg}.
	
	The study of contact varieties happens to be difficult. In dimension 3 they are classified by using Mori theory. Other authors study contact structures on Fano varieties but the results are not complete yet.
	
	\begin{theorem} \cite{Druel:1999dg}
		Let X be a smooth projective toric variety with dimension $2n+1$ $(n\geq 1)$ endowed with a contact structure.
		Then $X$ is isomorphic either to the complex projective space $\mathbb{P}^{2n+1}$ or to the variety $\mathbb{P}_{\mathbb{P}^1\times\cdots\times\mathbb{P}^1}(\mathcal{T}_{\mathbb{P}^1\times\cdots\times\mathbb{P}^1})$
	\end{theorem}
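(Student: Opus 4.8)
The plan is to follow Druel's argument in \cite{Druel:1999dg}: translate the contact condition into combinatorial constraints on the fan of $X$ and classify the fans that can satisfy them. Let $v_1,\dots,v_r$ be the primitive generators of the rays of the fan of $X$ and $D_1,\dots,D_r$ the corresponding torus-invariant prime divisors, so that $-K_X=D_1+\cdots+D_r$. A contact structure is the datum of the contact line bundle $L=\mathcal T_X/D$ together with the twisted one-form $\theta\in H^0(X,\Omega^1_X\otimes L)$; since $\theta\wedge(d\theta)^n$ is a nowhere-zero section of $K_X\otimes L^{\otimes(n+1)}$ and $X$ is projective and connected, this line bundle is trivial, so the first thing I would record is the relation $(n+1)L=-K_X=\sum_i D_i$ in $\operatorname{Pic}(X)=\operatorname{Cl}(X)$; in particular $-K_X$ is divisible by $n+1\ge 2$.

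Next I would make $\theta$ explicit using the toric structure. On a smooth complete toric variety the logarithmic cotangent bundle is trivial, $\Omega^1_X(\log\partial X)\cong M\otimes_{\mathbb Z}\mathcal O_X$, and the residue sequence $0\to\Omega^1_X\to M\otimes\mathcal O_X\to\bigoplus_i\mathcal O_{D_i}\to 0$ identifies $\theta$ with a linear map $N\to H^0(X,L)$, $v\mapsto\langle\theta,v\rangle$, whose value $\langle\theta,v_i\rangle$ vanishes along $D_i$ for each $i$; equivalently $L-D_i$ is effective for every $i=1,\dots,r$. Together with $(n+1)L=\sum_i D_i$ this is a rigid numerical package, since $\sum_i(L-D_i)=(r-n-1)L$ with all summands effective. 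I would use it to bound the Picard number $\rho(X)$: if $\rho(X)=1$ then $X\cong\mathbb P^{2n+1}$ — the only smooth projective toric variety of Picard rank one — with $L=\mathcal O(2)$; and if $\rho(X)\ge 2$ I would extract an elementary contraction $X\to Y$ onto a smooth projective toric variety and, iterating, a tower of projective bundles over a product of projective lines, eventually identifying $X$ with $\mathbb P_{\mathbb P^1\times\cdots\times\mathbb P^1}(\mathcal T)$.

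The decisive extra input is the nondegeneracy of $\theta\wedge(d\theta)^n$ read on the open torus $T\subset X$. After trivializing $L|_T$ we may write $\theta|_T=\sum_m\chi^m\,a_m$ with $a_m\in M\otimes\mathbb C$ viewed as invariant logarithmic one-forms; then $d\theta|_T=\sum_m\chi^m\,(m\wedge a_m)$ and $\theta\wedge(d\theta)^n|_T=g\cdot\Omega_0$, where $\Omega_0$ generates $\Lambda^{2n+1}(M\otimes\mathbb C)$ and $g\in\mathbb C[M]$. Because $\theta\wedge(d\theta)^n$ is a nowhere-zero section of a $T$-linearized trivial line bundle, $g$ must be a single Laurent monomial with nonzero coefficient: all mixed terms $a_{m_0}\wedge(m_1\wedge a_{m_1})\wedge\cdots\wedge(m_n\wedge a_{m_n})$ whose total degree differs from the leading one must cancel, while one of them must be a nonzero multiple of $\Omega_0$. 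In particular $\theta$ cannot be supported on a single character. I would feed this into the case analysis above: for each candidate fan the effectivity constraints $L-D_i\ge 0$ restrict which characters $m$ can occur in $\theta$, and the cancellation-plus-nonvanishing condition either reconstructs the contact form of one of the two standard examples or is violated, ruling the fan out.

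The hard part is the combinatorial classification at the end: running the case analysis over all smooth projective toric varieties of unrestricted Picard number with $-K_X$ divisible by $n+1$ and $L-D_i$ effective for all $i$, and showing in each case that the forced cancellations among the mixed terms of $\theta\wedge(d\theta)^n$ are incompatible with nondegeneracy unless the fan is one of the two on the list. I would organize this as an induction on $\rho(X)$ via extremal contractions, peeling off one $\mathbb P^1$- or $\mathbb P^n$-bundle structure at a time and checking that the contact datum restricts or descends, rather than by brute enumeration of fans; a point to be careful about is that $\theta$ itself need not be torus-invariant, which is exactly why the computation above is carried out for $\theta\wedge(d\theta)^n$ rather than for $\theta$.
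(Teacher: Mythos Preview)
Your approach and the paper's (which reproduces Druel's) diverge after the common starting point $-K_X=(n+1)L$. You propose a combinatorial route: use the residue sequence to extract effectivity constraints $L-D_i\ge 0$, expand $\theta|_T$ and $\theta\wedge(d\theta)^n|_T$ in characters, and run a case analysis on fans organized by induction on $\rho(X)$. The paper instead runs Mori theory directly and never touches a character computation. Since $-K_X=(n+1)L$, any extremal ray $R$ has length $l(R)\in\{n+1,2n+2\}$; the case $l(R)=2n+2$ forces $\operatorname{Pic}(X)=\mathbb Z$ and $X\cong\mathbb P^{2n+1}$. In the case $l(R)=n+1$, Reid's description of toric extremal contractions gives a $T$-invariant fiber $F\cong\mathbb P^k$ with $L|_F\cong\mathcal O_{\mathbb P^k}(1)$, and the decisive step---which has no analogue in your outline---is this: $H^0(\mathbb P^k,\Omega^1_{\mathbb P^k}(1))=0$, so $\theta|_F\equiv 0$, hence $T_xF\subset D_x$ is totally isotropic for every $x\in F$, forcing $k\le n$; Wi\'sniewski's inequality gives $k\ge l(R)-1=n$, so $k=n$ and $\phi$ is a smooth $\mathbb P^n$-fibration over a smooth $(n+1)$-fold $Y$. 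The relative tangent sequence then yields $X\cong\mathbb P_Y(\mathcal T_Y)$, and separate toric lemmas force $Y\cong(\mathbb P^1)^{n+1}$.

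This isotropy-plus-Wi\'sniewski argument is what replaces your ``hard part'': it pins down the fiber dimension in one stroke and makes the combinatorial case analysis unnecessary. As written, your proposal has a genuine gap precisely there. You concede the classification at the end is the hard part and leave it unexecuted; but your induction on $\rho(X)$ needs, at each step, a mechanism forcing the contraction to be a $\mathbb P^n$-bundle rather than a $\mathbb P^k$-bundle with $k<n$ or a birational contraction, and nothing in your character-cancellation setup visibly singles out $k=n$. A smaller point: your claim that $L-D_i$ is effective for every $i$ requires $\langle\theta,v_i\rangle\ne 0$, and you have not argued that nondegeneracy of $\theta\wedge(d\theta)^n$ rules out $\langle\theta,v_i\rangle=0$ for some $i$.
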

	
	\begin{proof}\renewcommand{\qedsymbol}{}
		The intersection pairing between 1-cycles and divisors induces a duality between the following two real vector spaces:
		$$N_1(X)=(\{1-\text{cycles}\}/\equiv)\otimes\mathbb{R}\ \text{and}\ N^1(X)=(\{1-\text{divisors}\}/\equiv)\otimes\mathbb{R},$$
		with $\equiv$ the numerical equivalence. As stated before, dimension of both vector spaces is called the Picard number of $X$.
		If we consider the cone as $NE(X)$, and let it be spanned by classes of effective 1-cycles, $NE(X)\subset N_1(X)$. Then an extremal ray is an half-line $R$ in $\bar{NE}(X)$ (the closure of $NE(X)$ in $N_1(X)$), such that $K_X.R^*<0$ and such that for all $Z_1,Z_2\in\bar{NE}(X)$ if $Z_1+Z_2\in R$ then $Z_1,Z_2\in R$ \cite{Mori:1982dg}. An extremal rational curve is an irreducible rational curve such that $\mathbb{R}^+[\mathbb{C}]$ is an extremal ray and $-K_X.C\leq \text{dim} X+1$. The first result from Mori theory is that every extremal ray is spanned by an extremal rational curve. The second fundamental result is that any extremal ray admits a contraction, i.e. there exists a normal projective variety $Y$ and a morphism $\phi:X\rightarrow Y$, surjective with connected fibers, contracting irreducible curves $C$ such that $[C]\in R$ (Kawamata-Shokurov theorem).
		
		For toric varieties, there is a more precise result \cite{Reid:1983}. Let $X$ be a smooth projective toric variety associated to the $\Delta$. We use notations from \cite{Oda:1988dg}. There there exist elements $\tau_1,\ldots,\tau_s\in\Delta(d-1)$ such that
		$$ NE(X)=\bar{NE}(X)=\mathbb{R}^+[V(\tau_1)]+\ldots+\mathbb{R}^+[V(\tau_s)]. $$
		The half lines $\mathbb{R}^*[V(\tau_i)]$ are called generalised extremal rays. Let $R\subset NE(X)$ a generalised extremal ray. Then there exists a projective toric variety $Y$ and an equivariant morphism $\phi:X\rightarrow Y$ which is a contraction in Mori's sense. Denote by $A\subset X$ the exceptional locus $\phi$ and $B=\phi(X)$:
		\begin{center}	
			\begin{tikzpicture}
				
				\node (A) at (-1,0) {$X$};
				\node (B) at (1,0) {$Y$};
				\node (C) at (-1,-1) {$\cup$};
				\node (D) at (1,-1) {$\cup$};
				\node (E) at (-1,-2) {$A$};
				\node (F) at (1,-2) {$B$};
				
				\draw [thick, ->] (A) -- (B) node[midway,above] {$\phi$};
				\draw [thick, ->] (E) -- (F) node[midway,above] {$\phi|_A$};
			\end{tikzpicture}
		\end{center}
		Then $A$ and $B$ are two irreducible toric stratas and the morphism $\phi_A:A\rightarrow B$ is flat and its fibers are weighted projective spaces. Finally, when the contraction is of fibered type, the variety $Y$ is regular and the morphism $\phi$ is smooth.
		
		Finally we recall a general result from J. Wisniewski \cite{Wisniewski:1989dg}, \cite{Wisniewski:1991dg} about the exceptional locus of extremal contraction. Let $F$ be an irreducible component of a non-trivial fiber of an elementary contraction associated to the extremal ray $R$. We call the locus of $R$, the locus of curves with numerical equivalence class in $R$. Then we have the inequality:
		$$\text{dim}F+\text{dim}(\text{locus of}\ R)\geq\text{dim} X+l(R)-1$$
		where $l(R)$ is the length of the extremal ray $R$:
		$$l(R)=\text{inf}\{-K_X.C_0|\ \text{with}\ C_0 \text{a rational curve and} C_0\in R\}.$$

		The rest of the proof of this theorem relies on the study of extremal contractions of $X$. Denote by $T$ the torus acting on $X$. As the canonical divisor $K_X$ of $X$ is not effective, there exists an extremal ray $R$ in the sense of Mori spanned by $C$ a smooth, rational, $T$-invariant curve. As $K_X=-(n+1)L$, we have $l(R)=n+1$ or $l(R)=2n+2$ with $l(R)$ the length of the extremal ray $R$. For the last case, $X$ is Fano and $Pic(X)=\mathbb{Z}$ (\cite{Wisniewski:1989dg} Prop. 2.4). Then $X$ is isomorphic to the complex projective space $\mathbb{P}^{2n+1}$ (\cite{Oda:1978dg} Thm. 7.1).
		
		It remains to study the case $l(R)=n+1$. Denote by $\phi:X\rightarrow Y$ the Mori contraction associated to the ray $R$, with $Y$ a projective toric variety. Denote by $T'$ the torus acting on $Y$. Denote by $A\subset X$ the exceptional locus of $\phi$ and $B=\phi(X)$:
		\begin{center}
			\begin{tikzpicture}
				\node (A) at (-1,0) {$X$};
				\node (B) at (1,0) {$Y$};
				\node (C) at (-1,-1) {$\cup$};
				\node (D) at (1,-1) {$\cup$};
				\node (E) at (-1,-2) {$A$};
				\node (F) at (1,-2) {$B$};
				
				\draw [thick, ->] (A) -- (B) node[midway,above] {$\phi$};
				\draw [thick, ->] (E) -- (F) node[midway,above] {$\phi|_A$};
			\end{tikzpicture}
		\end{center}
		As the curve $C$ is $T$-invariant, it is contracted to a fixed point $y\in B$ under $T'$. The morphism $\phi$ being equivariant, the fiber $F=\phi^{-1}(y)$ is $T$-invariant. Hence, as this fiber is irreducible, we deduce that $F$ is a toric strata and that $G$ is smooth as $X$ is not. Then the fiber $F$ is isomorphic to projective space $\mathbb{P}^k$ $(k\geq1)$ and, as $C$ is a toric strata of the toric variety $\mathbb{P}^k$, the curve $C$ is identified to a line in $\mathbb{P}^k$. We know that there exists a rational curve $C_1\subset X$ such that $C_1\in R$ and $-K_X.C_1=n+1$. But, as there exists a divisor $D$ on $X$ such that $D.C=1$ and as $R=\mathbb{R}^+[C]$, we easily deduce that $C\equiv C_1$ hence $C.L=1$. Therefore we have $(F,L_{|F})=(\mathbb{P}^k,\mathcal{O}_{\mathbb{P}^k}(1))$.
		
		As $H^0(\mathbb{P}^k,\Omega^1_{\mathbb{P}^k}(1))=(0)$, the restriction of the contact form $\theta$ to $\mathbb{P}^k$ vanishes and by a computation with local coordinates, we check that for all $x\in\mathbb{P}^k$, the vector subspace $\mathcal{T}_{\mathbb{P}^k}(x)\subset D(x)$ is totally isotropic for the alternate contact form which is non-degenerate by hypothesis. This implies the inequality $k\leq n$ and the equality $k=n$ as $\text{dim}F\geq l(R)-1=n$. Applying again Wisniewski inequality, we deduce that the contraction is of fibered type and therefore $Y$ is regular, $n+1$-kdimensional and $\phi$ is smooth. 
		
		Consider the exact sequence:
		$$ 0\rightarrow\mathcal{T}_{X/Y}\rightarrow\mathcal{T}_X\rightarrow\phi^*\mathcal{T}_Y\rightarrow0 $$
		The map $\mathcal{T}_{X/Y}\rightarrow L$ obtained by composition with the projection $\mathcal{T}_X\rightarrow L$ being identically zero by Grauert theorem and Lemma 3.1 and Lemma 3.2 in \cite{Druel:1999dg}, there exists a surjective map $\phi^*\mathcal{T}_Y\rightarrow L\rightarrow0$ and therefore a morphism $X\rightarrow \mathbb{P}_Y(\mathcal{T}_Y)$ over $Y$ which induces an isomorphism on each fiber. Then this morphism is in fact an isomorphism. Therefore the result follows from\\ 
		(1) the fact that for $Y$ a smooth projective toric variety of dimension $n$ and $\mathcal{E}$ a vector bundle of rank $r+1$ over $Y$. And with the assumption that $X=\mathbb{P}_Y(\mathcal{E})$ is a toric variety and the natural morphism $\phi:X\rightarrow Y$ is equivariant. Then the bundle $\mathcal{E}$ is the direct sum of its isotypical components.\\
		And (2) for $X$ a smooth projective toric variety of dimension $n\geq 1$ with a totally reducible tangent bundle. Then $X$ is isomorphic to $\mathbb{P}^1\times\cdots\times\mathbb{P}^1$.
	\end{proof}

	\begin{theorem}
		For $X$ a smooth projective toric variety of dimension $2n+1$ $(n\geq 1)$, defined over the field $\mathbb{C}$ of complex numbers and endowed with a contact structure. Then either $X$ is isomorphic to the complex projective space $\mathbb{P}^{2n+1}$ or $X$ is isomorphic to the variety $\mathbb{P}_{\mathbb{P}^1\times\cdots\times\mathbb{P}^1}(\mathcal{T}_{\mathbb{P}^1\times\cdots\times\mathbb{P}^1})$. The momentum map is defined $$\mu(x)=\frac{1}{\sum|\chi^u(x)|}\sum\limits_{u\in P\cap M}|\chi^u(x)|u$$ where $\chi^u$ for $u\in P\cap M$ are the sections of $\varphi:X\rightarrow\mathbb{P}^{2n}$ which is convex.
	\end{theorem}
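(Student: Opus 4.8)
\renewcommand{\qedsymbol}{}

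The plan is to treat the statement as two assertions: the classification of $X$, and the convexity of $\mu$. The classification of $X$ as either $\mathbb{P}^{2n+1}$ or $\mathbb{P}_{\mathbb{P}^1\times\cdots\times\mathbb{P}^1}(\mathcal{T}_{\mathbb{P}^1\times\cdots\times\mathbb{P}^1})$ is exactly the theorem of Druel proved above, so nothing new is needed there. What matters for the rest is that in either case $X$ is a smooth \emph{projective toric variety}: it carries a fan $\Delta$ in $N_\mathbb{R}$, a dense torus $T_N\subset X$, and a lattice polytope $P\subset M_\mathbb{R}$ whose lattice points $u\in P\cap M$ index the monomials $\chi^u$ realising $\varphi\colon X\to\mathbb{P}^{2n}$; since $\varphi$ is a morphism these monomials have no common zero on $X$, and $P=\conv(P\cap M)$.

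First I would check that $\mu(x)=\bigl(\sum_{u\in P\cap M}|\chi^u(x)|\bigr)^{-1}\sum_{u\in P\cap M}|\chi^u(x)|\,u$ defines a continuous map $X\to M_\mathbb{R}$. Rescaling the homogeneous coordinates $(\chi^u(x))_u$ by $\lambda\in\mathbb{C}^*$ multiplies numerator and denominator both by $|\lambda|$, so the right-hand side depends only on $x\in X$; and the denominator is nowhere zero by the no-common-zero property just recalled, so $\mu$ is well defined and continuous on all of $X$.

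The core point is then immediate: for each $x$, $\mu(x)$ is a convex combination of the finitely many points $u\in P\cap M$, with weights $|\chi^u(x)|/\sum_v|\chi^v(x)|\ge 0$ summing to $1$, hence $\mu(x)\in\conv(P\cap M)=P$ and $\mu(X)\subseteq P$. To upgrade this to $\mu(X)=P$ --- which yields convexity, $P$ being a polytope --- I would restrict to the open orbit $T_N$: parametrising a point of $T_N$ by its characters, the weights $(|\chi^u|)_{u}$ realise every positive solution of the multiplicative relations among the monomials, and the induced map into $M_\mathbb{R}$ is, by strict convexity of the associated log-sum-exp function (equivalently, by the Legendre duality between $P$ and that function), a diffeomorphism of $T_N$ onto the relative interior $\relint P$. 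As $X$ is compact and $\mu$ continuous, $\mu(X)$ is closed and contains $\relint P$, hence contains $\overline{\relint P}=P$; with $\mu(X)\subseteq P$ this gives $\mu(X)=P$, a convex polytope.

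In this description $P$ is nothing but the Delzant polytope of the toric variety $X$ relative to the linear system defining $\varphi$, and $\mu$ is the corresponding Atiyah--Guillemin--Sternberg momentum map; the substance of the statement is the observation that, via Druel's classification, this convex polytope also governs the contact structure on $X$. The genuinely non-formal ingredients are (a) the fact that the $\chi^u$ have no common zero, needed for the denominator, and (b) the surjectivity $\mu(T_N)=\relint P$; I expect (b) --- establishing that the image is exactly $P$ rather than merely contained in it --- to be the main obstacle, while well-definedness, the inclusion $\mu(X)\subseteq P$, and hence convexity are forced by the form of the formula.
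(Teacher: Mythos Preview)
Your approach is essentially the same as the paper's: both invoke Druel's classification for the first assertion, then argue convexity by observing $\mu(x)$ is a convex combination of the lattice points of $P$ (so $\mu(X)\subseteq P$) and using the key lemma that the exponential/log-sum-exp map sends the real torus bijectively onto the interior of $P$ (so $\mu(X)\supseteq\relint P$). The paper phrases that lemma exactly as you do, via $\rho_i(x)=\varepsilon_i e^{u_i(x)}/\sum_j\varepsilon_j e^{u_j(x)}$ giving a real analytic isomorphism onto $\operatorname{Int}K$.

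Two small points. First, a minor imprecision: you call the restriction of $\mu$ to $T_N$ a diffeomorphism onto $\relint P$, but the real dimensions do not match; what you mean is that $\mu$ is $S_N$-invariant (since $|\chi^u(s\cdot x)|=|\chi^u(x)|$ for $s\in S_N$) and the induced map $\tilde\mu\colon T_N/S_N=N_\mathbb{R}\to\relint P$ is the diffeomorphism. The paper makes this factorisation through $X_\geq=X/S_N$ explicit. Second, the paper goes slightly further than you need: it shows $\tilde\mu$ maps each orbit quotient $(O_\sigma)_\geq$ bijectively onto the interior of the corresponding face $Q$ of $P$, yielding a homeomorphism $X_\geq\cong P$, whereas your compactness argument only delivers $\mu(X)=P$. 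For the theorem as stated your shortcut suffices and is cleaner; the paper's face-by-face version is the stronger structural statement.
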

	
	\begin{proof}\renewcommand{\qedsymbol}{}
		For X a smooth projective toric variety with dimension $2n+1$ $(n\geq 1)$ endowed with a contact structure.
		$X$ is then isomorphic either to the complex projective space $\mathbb{P}^{2n+1}$ or to the variety $\mathbb{P}_{\mathbb{P}^1\times\cdots\times\mathbb{P}^1}(\mathcal{T}_{\mathbb{P}^1\times\cdots\times\mathbb{P}^1})$.
		
		Since the toric varieties have been established, what is left is to talk about how a toric variety will be redefined in the event of singularities, and according to compactness. This is before fully identifying the orbits, topology and line bundles all required in order to formulate the momentum polytope.
		
		\underline{The considerations for singularities and compactness}
		
		For specific point $x_\sigma$ of its affine variety, $S_\sigma=\sigma^\smallsmile\rightarrow (1,0)\subset\mathbb{C}^*\cup\{0\}=\mathbb{C}$ where $u$ maps to 1 if $u\in\sigma^\perp$ and 0 otherwise. The existence of fixed points in $U_\sigma$ is contingent on $\sigma$ spanning $N_\mathbb{R}$, and in turn $x_\sigma$ is then a unique fixed point of $T_N$ acting on $U_\sigma$. For singular points of the toric varieties, suppose $U_\sigma$ is nonsingular at point $x_\sigma$, which means that $\sigma^\smallsmile$ can't have more than $n$ edges. And eventually, $U_\sigma$ is isomorphic to $\mathbb{C}^n$. If we let $N_\sigma=N\cap\sigma+(-\sigma\cap N)$ be a sublattice of $N$, then $N(\sigma)=N/N_\sigma$ is the quotient group that is also a lattice, and the map $N_\sigma\rightarrow N\rightarrow N(\sigma)$ . And so a cone is nonsingular if it is generated by part of a basis for the lattice. And if all the cones of a fan are nonsingular, then in turn that fan is nonsingular. Although a toric variety can be singular, every toric variety is normal. $\sigma$ is generated by part of a basis of $N$ if affine toric variety $U_\sigma\simeq\mathbb{C}^k\times(\mathbb{C}^*)^{n-k}$ is nonsingular, where $\text{dim}(\sigma)=k$. And each ring $A_\sigma=\mathbb{C}[S_\sigma]$ is integrally closed. Therefore we have a method for extrapolating a nonsingular case from a singular case.
		
		For a lattice $N$ of any rank, a sublattice $N'\subset N$ of finite index, with corresponding duals $M\subset M'$, then $M'/M\times N/N'\rightarrow\mathbb{Q}/\mathbb{Z}$ by $\langle,\rangle$ pairing, and $\mathbb{Q}/\mathbb{Z}\hookrightarrow\mathbb{C}^*$ by $q\mapsto\text{exp}(2\pi iq)$. Therefore $G=N/N'$ acts on the algebra $\mathbb{C}[M']$ by $v\cdot\chi^{u'}=\text{exp}(2\pi i\langle u',v\rangle)\cdot\chi^{u'}$ for $u'\in M'$ and $v\in N$ and therefore $\mathbb{C}[M']^G=\mathbb{C}[M]$. And $T_{N'}/G=T_N$.
		
		For $\sigma$ any simplex, meaning that it is generated by independent vectors, for example if $\sigma$ is an $n$-simplex and therefore generated by $n$ independent vectors, therefore the minimal elements in $\sigma\cap N$ along all the $n$ edges generate the sublattice $N'$ giving a cone $\sigma'$, $\mathbb{C}^n=U_{\sigma'}\rightarrow U_\sigma$. And we have the result of action $U_{\sigma'}/G=U_\sigma=\mathbb{C}^n/G$. Now if $\sigma$ is any simplex, and in particular if all the cones in $\Delta$ are simplicial, making it in turn a simplicial fan, then the toric variety only has quotient singularities (is an orbifold).
		
		For non-affine toric varieties, starting with the same fan used in the assembly of the projective space, meaning this fan's cones are generated by proper subsets of vectors $\{v_0,v_1,\ldots,v_n\}$ of which any $n$ linearly independent vectors exist and whose sum is zero. Whilst $N$ is generated by vectors $(1/d_i)\cdot v_i$, $0\leq i\leq n$. And so we have a 'twisted' or 'weighted projective space', $\mathbb{P}(d_0,\ldots,d_n)=\mathbb{C}^{n+1}\backslash\{0\}/\mathbb{C}^*$, where the $\mathbb{C}^*$ action is defined, $\varsigma\cdot(x_0,\ldots,x_n)=(\varsigma^{d_0}x_0,\ldots,\varsigma^{d_n}x_n)$. This therefore makes the group action compatible on affine open subvarieties.
		
		Therefore one can find a sublattice for any cone in a simplicial fan, for which the associated open subvariety of the toric variety can be expressed as a quotient by any finite abelian group. Like for the weighted projective spaces, there usually exists a sublattice for all the open sets, but not always. There exist fans with particular edges for which every sublattice, the variety is singular.
		
		If we take $\mathbb{G}_m$ to be the multiplicative algebraic group for, say, $\mathbb{C}^*$, then every one-parameter subgroup $\lambda:\mathbb{G}\rightarrow T_N$ exists for a unique $v\in N$, denoted $\lambda_v$. For $z\in\mathbb{C}^*$ $\lambda_v(z)\in T_N$ meaning it is given by a group homomorphism from $M$ to $\mathbb{C}^*$. Where explicitly for $u$ in $M$, $\lambda_v(z)(u)=\chi^u(\lambda_v(z))=z^{\langle u,v\rangle}$ and $\langle,\rangle$ is the dual pairing $M\otimes N\rightarrow \mathbb{Z}$. Where the dual $M=\text{Hom}(N,\mathbb{Z})=\text{Hom}(T_N,\mathbb{G}_m)$, and every character $\chi:T_N\rightarrow\mathbb{G}_m$ is given by a unique $u$ in $M$. The function $\chi^u$ in the coordinate ring $\mathbb{C}[M]=\Gamma(T_N,\mathcal{O}^*)$ gives the character corresponding to $u$.
		This gives how to ascertain $N$ from $T_N$.
		
		Looking at the limits $\lim_{z\to0} \lambda_v(z)$ for different values of $v\in N$, then this way given a $\sigma$ we can recover $\sigma$ from the $T_N\subset U_\sigma$ torus embedding.
		
		If $|\Delta|$ contains $v$, and $\Delta$ has cone $\tau$ that contains $v$ in its relative interior then $x_\tau=\text{lim}_{z\rightarrow0}\lambda_v(z)$. And if $v$ is not in any cone of $\Delta$, then $\text{lim}_{z\rightarrow0}\lambda_v(z)$ does not exist in $X(\Delta)$. There is no limit or converging subsequence for $v$s not in the union of cones $\Delta$ (not in the support $|\Delta|$).

		$X(\Delta)$ is compact if and only if $|\Delta|=N_\mathbb{R}$.
		For a homomorphism $\phi:N'\rightarrow N$, which maps fan $\Delta'$ to $\Delta$, the morphism $\phi_*:X(\Delta')\rightarrow X(\Delta)$ is proper if and only if $|\Delta'|=\phi^{-1}(|\Delta|)$. An example of a proper map is what's called blowing up. 
		
		Two dimensional nonsingular complete toric varieties are given by sequence of lattice points, $v_0,v_1,\ldots,v_{d-1},v_d=v_0$, in $N=\mathbb{Z}^2$, so that successive pairs generate the lattice. Since $v_0$ and $v_1$ are a basis for a lattice, it follows that $v_1$ and $v_2$ are also a basis where $v_2=-v_0+a_1v_1$ for $a_1\in\mathbb{Z}$ (i.e. $a_iv_i=v_{i-1}+v_{i-1}$ where $1\leq i\leq d$ for some integers $a_i$). Cones created (for example between $v_{i-1}$ and $v_{i+1}$) cannot have a third vector (e.g. $v_j$) in the angle strictly between them. This is an example of how the possible configurations are topologically constrained. All of these surfaces can be classified. In fact, all toric surfaces that are complete and nonsingular are procured from $\mathbb{P}^2$ or $\mathbb{F}_a$ by a series of blow-ups at $T_N$-fixed points. 
		
		A strongly convex cone is generated by $v_{j-1}$ and $v_{j+1}$ for some $j$ $1\leq j\leq d$ when $d\geq5$. And we have that $v_j=v_{j-1}+v_{j+1}$.
		
		Before where we fixed the cone to refine the lattice, in order to resolve singularities, let us now look at fixing the lattice and subdividing the cones. A refinement $\Delta'$ can be created from $\Delta$, so each cone of $\Delta$ is a union of cones in $\Delta'$. We obtain a birational and proper morphism $X(\Delta')\rightarrow X(\Delta)$ from the identity map of $N$. This map is an isomorphism on $T_N$ contained in each. More details can be found in section \textsection 2.6 of \cite{Fulton:1993dg}, including an example of creating new generators of the cone using the Hirzebruch-Jung continued fraction of $m/k$ so that, for example, $v=e_2$ and $v'=me_1-ke_2$ generate $\sigma$ rather than $e_1$ and $e_2$ for a cone that is not generated by a basis for $N$. The indicated subdivision provides a nonsingular $X(\Delta')$ mapping birationally and properly to $U_\sigma$. In similar such ways this construction can be used on singular toric varieties to resolve singularities.
		
		In more detail, given a fan $\Delta$ in any lattice $N$ and any lattice point $v$ in $N$, then $\Delta$ can be created from subdividing $\Delta$. Meaning that each cone that contains $v$ is exchanged with the sums of faces that have a ray through $v$, and every cone that doesn't contain $v$ is left unchanged. The mapping from $X(\Delta')$ to $X(\Delta)$ is proper and birational since $\Delta$ and $\Delta'$ have the same support. Such subdivisions are taken until a nonsingular toric variety is reached. 
		
		%--------------------------------------------------------------------------------------
		%	CHAPTER THREE OF FULTON
		%--------------------------------------------------------------------------------------
		
		\underline{With accordance to orbits, topology and line bundles}
		
		So we have established that $X=X(\Delta)=\bigcup_{disjoint}$ its orbits by $T_N$-action. There exists an orbit $O_\tau$ for each cone $\tau$ in $\Delta$ containing distinguished point $x_\tau$, and if $\text{dim}(\tau)=k$ then $O_\tau\simeq(\mathbb{C}^*)^{n-k}$; if $x_\tau $ is $n$-dimensional then $O_\tau$ is $x_\tau$; and if $\tau=\{0\}$, then $O_\tau=T_N$. Denote by $V(\tau)$ the open subvariety of the closure of $O_\tau$. It is the closed subvariety of the toric variety $X$ and $V(\tau)=\bigcup_{disjoint} O_\gamma$ where $\tau$ is contained in $\gamma$ as a face.
		
		In general $O_\tau=\text{Hom}(\tau^\perp\cap M,\mathbb{C}^*)$. Let $N_\tau$ denote the sublattice of $N$ generated by $\tau\cap N$ and define the quotient lattice and its dual as $N(\tau)=N/N_\tau$, and $M(\tau)=\tau^\perp\cap M$. Therefore $T_{N(\tau)}=\text{Hom}(M(\tau),\mathbb{C}^*)=O_\tau$ is the torus corresponding to these lattices, of dimension $n=k$ where $k$ is the value of the dimension of $\tau$. And we have that $O_\tau=\text{Spec}(\mathbb{C}[M(\tau)])=N(\tau)\otimes_\mathbb{Z}\mathbb{C}^*$, $T_N$ acts transitively via the projection $T_N\rightarrow T_{N(\tau)}$.
		
		Next we will introduce something called the 'Star of the cone $\tau$' which can be loosely defined as the set of cones $\sigma$ in $\Delta$ that contain $\tau$ as a face. The cones $\bar{\sigma}=\sigma+(N_\tau)_\mathbb{R}/(N_\tau)_\mathbb{R}$ where $\bar{\sigma}$ is such that $\tau<\sigma$, is a subset of $N_\mathbb{R}/(N_\tau)_\mathbb{R}=N(\tau)_\mathbb{R}$. Forming a fan in $N(\tau)$, denoted $\text{Star}(\tau)$. Conceptualise $\text{Star}(\tau)$ as the cones that contain $\tau$ but register it as a fan in $N(\tau)$.
		
		The $(n-k)$-dimensional toric variety $V(\tau)$ is therefore defined as $X(\text{Star}(\tau))$.
		
		As $\sigma$ varies over all the cones in $\Delta$, the corresponding toric variety $V(\tau)$ has an affine open covering $U_\sigma(\tau)$, with $\tau$ as a face. This affine open covering is defined as $\text{Spec}(\mathbb{C}[\sigma^\smallsmile\cap\tau^\perp\cap M])$ where the corresponding dual face to $\tau$ is $\sigma^\smallsmile\cap\tau^\perp$, a face of $\sigma^\smallsmile$. 
		
		$U_\sigma(\tau)$ is embedded in $U_\sigma$, this can be seen due to the respective semigroup homomorphisms: $\text{Hom}_{sg}(\sigma^\smallsmile\cap\tau^\perp\cap M,\mathbb{C})$ is embedded in $\text{Hom}_{sg}(\sigma^\smallsmile\cap M,\mathbb{C})$. Since the extension by zero of a semigroup homomorphism is a semigroup homomorphism as $\sigma^\smallsmile\cap\tau^\perp$ is a face of $\sigma^\smallsmile$. In fact if $u$ is in $\sigma^\smallsmile\cap\tau^\perp\cap M$, then $\chi^u$ is projected to $\chi^u$, by the surjection of rings $\mathbb{C}[\sigma^\smallsmile\cap M]\twoheadrightarrow\mathbb{C}[\sigma^\smallsmile\cap\tau^\perp\cap M]$. And we therefore have the closed embedding $$V(\tau)\hookrightarrow X(\Delta).$$ And closed embeddings $V(\tau')\hookrightarrow V(\tau)$ if $\tau$ is a face of $\tau'$, given by extension by zero from $\text{Hom}_{sg}(\sigma^\smallsmile\cap\tau'^\perp\cap M,\mathbb{C})$ to $\text{Hom}_{sg}(\sigma^\perp\cap\tau^\perp\cap M,\mathbb{C})$, on $U_\sigma$ for $\sigma\in\text{Star}(\tau')$. Or also given by considering $V(\tau)$ as a toric variety and exerting the construction described above. Therefore establishing an order-reversing correspondence from $\tau$ in $\Delta$ to closures $V(\tau)$ in $X(\Delta)$.
		
		This all implies that $\oplus\mathbb{C}\cdot\chi^u$ is the ideal of $V(\tau)\cap U_\sigma$ in $A_\sigma$. This ideal is the sum over all $u$ in $S_\sigma$ so that for $v$ in the relative interior of $\tau$, $\langle u,v\rangle>0$.
		
		The following are the presiding relations between affine open sets $U_\sigma$, orbits $O_\tau$ and orbit closures $V(\tau)$: $U_\sigma = \coprod_{\tau\prec\sigma}O_\tau$; $V(\tau)=\coprod_{\gamma\succ\tau}O_\gamma$; $O_\tau=V(\tau)\setminus\bigcup_{\gamma\succneqq\tau}V(\gamma)$.
		
		According to the third relation, $T_N=X(\Delta)\setminus\bigcup_{\gamma\neq{0}}V(\gamma)$ for $\tau=\{0\}$. And therefore, $X(\Delta)$ is the disjoint union of the $O_\tau$ that are the orbits of the $T_N$-action. And when $\sigma$ is a face of $\tau$, is precisely when $O_\tau$ is contained within the closure of $O_\sigma$. And the closed orbits are equal to $O_\sigma$ for which $\sigma$ is maximal cone in $\Delta$. Removing some maximal cones $\sigma$ from the fan $\Delta$, whilst maintaining their faces - denote the resulting fan, $\Delta^\circ$ - then $X(\Delta^\circ)$, the toric variety obtained from $X(\Delta)$ with its closed orbits $O_\sigma$ removed.
		
		If we construct a fan from a polytope $P$, then for each face $Q$ of the polytope has a cone $\sigma_Q$, and $V(\sigma_Q)$, the invariant subvariety for each face, can simply be denoted $V_Q$ (when $Q$ is a face of $Q'$, then $V_{Q'}$ contains $V_Q$). And the real dimension of $Q$ is equal to the complex dimension of $V_Q$.
		
		Each $V(\tau)$ is nonsingular if $X(\Delta)$. 
		
		Let us introduce $\pi_1(X(\Delta))$ as the fundamental group of a toric group. Complete toric varieties are simply connected (where $\Delta$ is a fan that includes an $n$-dimensional cone.)(The base points can be considered as the origins of the embedded tori, and so will be omitted). Due to the inclusion $T_N\hookrightarrow X(\Delta)$ provides the surjection $\pi_1(T_N)\twoheadrightarrow\pi_1(X(\Delta))$.
		
		The normality is crucial: the closed complement isn't small enough. Therefore the variety is not simply connected but still have that the complement of the point in the irreducible variety is simply connected. Consider as an example, the variety that is equal to 2 simply connected points.
		
		If we take $v\in N$ to $S^1\subset\mathbb{C}^*\rightarrow T_N$ where $\lambda_v$ is the $\mathbb{C}^*\rightarrow T_N$ map defined before, we have the canonical isomorphism between $N$ and $\pi_1(T_N)$, for any $T_N$. The loop $S^1$ can be contracted to $U_\sigma$ if $v$ is in $\sigma\cap N$ for some cone $\sigma$. As $\text{lim}_{z\rightarrow0}\lambda_v(z)=x_\sigma$ exists in $U_\sigma$, $\lambda_v$ enhances as a $\mathbb{C}\rightarrow U_\sigma$ map. For $z\in S^1$ this contraction is defined as $\lambda_v(tz)$  and $0<t\leq1$, and $x_\sigma$ for $t=0$. For all $t$ the general all-encompassing one-parameter subgroup for $v$ is denoted $\lambda_{v,t}(z)$. 
		
		It follows that if $\sigma'$ is a cone in lattice $N_\sigma$ generated by $\sigma$ then a canonical isomorphism from $\pi_1(U_\sigma)$ to $\pi_1(T_{N(\sigma)})$ which is identically $N(\sigma)$ according to the fibration $U_{\sigma'}\rightarrow U_\sigma\rightarrow T_{N(\sigma)}$. Of course as $\pi_1(\mathbb{C}^*)=\pi_1(S^1)=\mathbb{Z}$ then $U_\sigma=U_{\sigma'}\times(\mathbb{C}^*)^{n-k}$, then $\pi_1(U_\sigma)\simeq\mathbb{Z}^{n-k}$ when $\sigma$ is a $k$-dimensional cone. When $\sigma$ is $n$-dimensional, then $N$ is generated as a group by $\sigma\cap N$. And if such loops are trivial in $U_\sigma$ then all loops are trivial.
		
		If $N'\subset N$ given by $\sigma\cap N$ as $\sigma$ varies over $\Delta$, and since for each $\sigma$ $\pi_1(U_\sigma)=N/N_\sigma$, this means that as $\sigma$ varies over $\Delta$, $\pi_1(X(\Delta))=N/N'$.
		
		And indeed $U_\sigma$ is contractible if $\sigma$ is $n$-dimensional, and $O_\sigma\subset U_\sigma$ is a deformation contract if $\text{dim}(\sigma)=k$. And where $\sigma^\perp\cap M=M(\sigma)$ then if we define a homotopy $H:U_\sigma\times[0,1]\rightarrow U_\sigma$ between the identity map and the retraction $r:U_\sigma\rightarrow x_\sigma$, the canonical isomorphism $H^i(U_\sigma;\mathbb{Z})\simeq\bigwedge^i(M(\sigma))$.
		
		Some information about the cohomology of $X(\Delta)$ can be provided from knowing the cohomology of basic open sets $U_\sigma$, the cohomology of $X$ can be derived as the \u{C}ech cohomology of the covering. 
		
		Looking at the spectral sequence 
		$$E_1^{p,q}=\bigotimes_{i_0<\ldots<i_p}H^q(U_{i_0}\cap\ldots\cap U_{i_p})\Rightarrow H^{p+q}(X).$$ This is the spectral sequence of the double complex $\bigoplus\mathcal{C}^q(U_{i_0}\cap\ldots\cap U_{i_p})$
		if $\mathcal{C}^*$ is a resolution that is injective of the sheaf. And this is true with any sheaf or coefficient group. It is the spectral sequence with vertical maps coming from $\mathcal{C}^*$ (complex) and the horizontal maps the so-called \u{C}ech maps according to alternating sums of restrictions.
		
		Where $\sigma_i$ are the maximal cones of fan $\Delta$, can apply the spectral sequence to the covering according to open sets $U_i=U_{\sigma_i}$, then, $$E_1^{p,q}=\bigotimes_{i_0<\ldots<i_p}\bigwedge^qM(\sigma_{i_0}\cap\ldots\cap\sigma_{i_p})\Rightarrow H^{p+q}(X(\Delta))$$ which gives us $\chi(X(\Delta))$, that is the topological Euler characteristic. For every $\text{dim}\tau<n$, then the alternating sum $\sum(-1)^q\text{rank}(\wedge^qM(\tau))=0$, otherwise when $\text{dim}\tau=n$ then $\sum(-1)^q\text{rank}(\wedge^qM(\tau))=1$ and so, the topological Euler characteristic is $\chi(X(\Delta))=\sum(-1)^q\text{rank}E_1^{p,q}=\sharp n-\text{dimensional cones in} \Delta$. If $\Delta$ is complete i.e. $\text{dim}\Delta=n$, and $U_{\sigma_i}$ is contractible, $E_1^{0,q}=0$ for $q\geq 1$. Additionally, $E^{\cdot,0}_1$ is the cochain complex of a simplex, i.e. a Koszul complex. Meaning that for $p$ greater than $1$, $E_2^{p,0}=0$. Then according to the spectral sequence, $\text{Ker}(E_1^{1,1}\rightarrow E_1^{2,1})=E_\infty^{1,1}=E_2^{1,1}=H^2(X(\Delta))=\text{Ker}(\bigoplus_{i<j}M(\sigma_i\cap\sigma_j)\rightarrow\bigoplus_{i<j<k}M(\sigma_i\cap\sigma_j\cap\sigma_k))$. Where $\chi^u\neq0$ on $U_\sigma$ given by any element $u$ of $M(\sigma)=\sigma^\perp\cap M$. The line bundle on $X(\Delta)$ is given by the cocycle defined by the above kernel, and every element of the first Chern class of a line bundle is $H^2(X)$.
		
		%%%%%%%%%%%Divisors
		On any variety, a finite formal sum $\sum a_iV_i$ of irreducible closed subvarieties $V$ of codimension one in $X$ is called a Weil divisor. Let $D$ denote the Cartier divisor, which is given by the covering data of $X$ by $U_\alpha$ and $f_\alpha$ -the affine open sets and nonzero rational functions respectively. $f_\alpha$ are referred to as local equations so that the ratios $f_\alpha/f_\beta\neq0$ everywhere are are regular functions on $U_\alpha\cap U_\beta$. If we look at the ideal sheaf of $D$, denoted $\mathcal{O}(-D)$, it is a subsheaf of the sheaf of rational functions generated by $f_\alpha$ on $U_\alpha$; and its inverse, $\mathcal{O}(D)$ is the subsheaf of the sheaf generated by the inverse rational functions $1/f_\alpha$ on $U_\alpha$. The transition functions of this line bundle which take us from $U_\alpha$ to $U_\beta$ are $f_\alpha/f_\beta$. Denote the Weil divisor $[D]$ and $\text{ord}_V(D)$  the order of the vanishing of an equation for $D$ in the local ring along $V$. Define $[D]$ according to $D$, $[D]=\sum_{\text{cod}(V,X)=1}\text{ord}_V(D)\cdot V$. Of course the concept of order here is rather limited as these rings that are local are valuation rings which are discrete when $X$ is normal. The map $D\mapsto[D]$ denotes as embedding, for normal varieties, of the group of Cartier divisors into the group of Weil divisors. A principal divisor $\text{div}(f)$ for a $f\neq0$ and rational, the local equation is each open set is $f$.
		
		Divisors on the toric variety that map to themselves by $T$, the irreducible subvarieties of codimension one of which are $T$-stable attribute to the rays (or edges) of the fan. If we let $v_i$ be the first lattice point met along edge $\tau_i$, amongst edges $\tau_1,\ldots,\tau_d$, then $D_i=V(\tau_i)$, i.e. these divisors are orbit closures. And for $a_i\in\mathbb{Z}$ the $T$-Weil divisors are the sums $\sum a_iD_i$. 
		
		Let's define $T$-Cartier divisors as, the Cartier divisors that are equivariant by $T$. If we have the affine case where $\text{dim}(\sigma)=n$ and $X=U_\sigma$, then for the fractional ideal $I=\Gamma(X,\mathcal{O}(D))$, $D$ is the divisor that $T$ preserves. And for a unique $u\in\sigma^\smallsmile\cap M$, $I$ is generated by function $\chi^u$. In other words, $I$ is a direct sum of spaces $\mathbb{C}\cdot\chi^u$ over some set of $u$ in $M$, meaning that $I$ is graded by $M$ is implied from the divisor being $T$-invariant. For $\mathfrak{M}=\sum_{u\neq0}\mathbb{C}\cdot\chi^u$, then immediately $\text{dim}I/\mathfrak{M}I=1$ is a must, because $I$ is the principal at determining point $x_\sigma$. Therefore we have that $I=A_\sigma\cdot\chi^u$ for unique $u$. And for some unique $u\in M$, a general $T$-Cartier divisor on $U_\sigma$ is of the form $\text{div}(\chi^u)$.
		
		$\text{ord}_{V(\tau)}(\text{div}(\chi^u))=\langle u,v\rangle$ when $u\in M$ and $v$ is the first lattice point along edge $\tau$. Therefore $[\text{div}(\chi^u)]=\sum_i\langle u,v_i\rangle D_i$. A $T$-Cartier divisor on $U_\sigma$ is of the form $\text{div}(\chi^u)$ for some $u\in M$ when $\text{dim}(\sigma)<n$. However, since $U_\sigma=U_{\sigma'}\times T_{N(\sigma)}$ where $\sigma$ and $\sigma'$ share the same edge and therefore the same Weil divisors, and therefore $T$-Cartier divisors on $U_\sigma$ are in agreement with elements of $M/M(\sigma)$, then $u-u'$ is in $\sigma^\perp\cap M=M(\sigma)$ if and only if $\text{div}(\chi^u)$ and $\text{dim}(\chi^{u'})$ are equal. 
		
		A $T$-Cartier divisor is defined, on a general toric variety $X(\Delta)$, by choosing an element $u(\sigma)\in M/M(\sigma)$ for each $\sigma$ in the fan. And specifying $\text{div}(\chi^{-u})$ (minus sign is taken to conform to the literature) on $U_\sigma$. Another way of saying this is, $\Gamma(U_\sigma,\mathcal{O}(D))=A_\sigma\cdot\chi^{u(\sigma)}$ meaning that $\chi^{u(\sigma)}$ generates the fractional ideal of $\mathcal{O}(D)$ on $U_\sigma$. These are true based on a lack of discrepancy concerning overlaps. Meaning that when $\tau$ is a face of $\sigma$, $u(\sigma)\mapsto u(\tau)$ under the canonical map $M/M(\sigma)\mapsto M/M(\tau)$. Basically, $\sigma_i$ are the maximal cones, and $\text{Ker}(\bigoplus_i M/M(\sigma_j)\rightarrow \oplus_{i<j}M/M(\sigma_i\cap\sigma_j))=\text{lim}_{\leftarrow}M/M(\sigma)=\{\text{$T$-Cartier divisors}\}$.

		%%%%%%Line bundles
		
		For any irreducible variety $X$, a homomorphism is given from the group of Cartier divisors on $X$ onto the group of all line bundles $\text{Pic}(X)$, where the group of principal divisors are the kernel. Let's define the group of all Weil divisors modulo $[\text{div}(f)]$, the subgroup of divisors of rational functions, $A_{n-1}(X)$. The homomorphism $\text{Pic}(X)$ to $A_{n-1}(X)$ is determined by the map $D\mapsto[D]$, which when $X$ is normal is an embedding $\text{Pic}(X)\hookrightarrow A_{n-1}(X)$.
		
		A homomorphism is given from $M$ to the group of $T$-Cartier divisors $\text{Div}_TX$ by a principal Cartier divisor determined by any $u\in M$ for a $X$. The computation of $\text{Pic}(X)$ only requires $T$-Cartier divisors and functions. This is similarly true for $A_{n-1}(X)$ with $T$-Weil divisors. Both for complete $X$.
		
		For a fan not contained within a proper subspace of $N_\mathbb{R}$ then we have the exact sequences $0\rightarrow M\rightarrow\text{Div}_TX\rightarrow\text{Pic}(X)\rightarrow0$ and $0\rightarrow M\rightarrow\bigoplus^d_{i=1}\mathbb{Z}\cdot D_i\rightarrow A_{n-1}(X)\rightarrow0$ with embeddings $\text{Div}_TX\hookrightarrow\bigoplus^d_{i=1}\mathbb{Z}\cdot D_i$ and $\text{Pic}(X)\hookrightarrow A_{n-1}(X)$ and the $M$s of each exact sequence are equal. More specifically, for $d$ the number of edges of the fan, $\text{rank}(\text{Pic}(X))\leq\text{rank}(A_{n-1}(X))=d-n$. And in particular, $\text{Pic}(X)$ is free abelian.
		
		Now $\text{Pic}(X(\Delta))\simeq H^2(X(\Delta),\mathbb{Z})$ if all maximal cones of $\Delta$ are $n$-dimensional.
		
		$u(\sigma)\in M/M(\sigma)$ which provides the data for $D$, gives us the continuous piecewise linear function $\Psi_D$ on the support $|\Delta|$. Restricting $\Psi_D$ to $\sigma$ is defined as the linear function $u(\sigma)$: $\Psi_D(V)=\langle u(\sigma),v\rangle$ for $v\in\sigma$. This function is well defined and continuous thanks to the compatibility of the data. 
		Any continuous function given by an element of the lattice $M$, that is on the support $|\Delta|$ which is both integral and linear on each cone, is derived from a unique $T$-Cartier divisor. 
		
		$[D]=\sum-\Psi_D(v_i)D_i$, or otherwise one may say that because $\Psi_D(v_i)=-a_i$, this determines $\Psi_D$ (only when $D=\sum a_iD_i$). $\Psi_{D+E}=\Psi_D+\Psi_E$ and $\Psi_{mD}=m\Psi_D$. The linear function $-u$ is $\Psi_{\text{div}(\chi^u)}$. And $\Psi_D$ and $\Psi_E$ are different only up to the linear function $u\in M$ if $D$ and $E$ are linearly equivalent divisors.
		
		A rational convex polyhedron $P_D$ in $M_\mathbb{R}$ is defined as the $u\in M_\mathbb{R}$ for which $\langle u,v_i\rangle\geq a_i$ for all $i$, or equivalently, the $u\in M_\mathbb{R}$ for which $u\geq\Psi_D$ on the support $|\Delta|$, for $T$-Cartier divisor $D=\sum a_iD_i$ on $X(\Delta)$.
		
		$\Gamma(X,\mathcal{O}(D))=\bigoplus_{u\in P_D\cap M}\mathbb{C}\cdot\chi^u$ represent the global sections of the line bundle $\mathcal{O}(D)$. It follows that in correspondence with the direct sum over the intersection of the $P_D(\sigma)\cap M$, $\bigcap\Gamma(U_\sigma,\mathcal{O}(D))=\Gamma(X,\mathcal{O}(D))$ therefore the above works for restrictions to open sets that are smaller.
		
		When the support is equal to $N_\mathbb{R}$, then the toric variety is complete, in turn on any complete variety, the cohomology groups of a suitable sheaf will be of finite dimension. Specifically, $\Gamma(X,\mathcal{O}(D))$ is of finite dimension and $P_D\cap M$ is also finite when the cones $\Delta$ span $N_\mathbb{R}$.
		
		When is a line bundle generated by its sections? Reconsider the real valued function $\Psi$ on a vector space, it is upper convex if $\Psi(t\cdot v+(1-t)\cdot w)\geq t\psi(v)+(1-t)\psi(w)$ for all $0\leq t\leq1$ and for all vectors $v$ and $w$. The convex function $\Psi$ is strictly convex if for all $n$-dimensional cones, $\text{graph}(\Psi)$ on the complement of $\sigma$ lies strictly beneath $\text{graph}(u(\sigma))$. 
		
		If all maximal cones in $\Delta$ are $n$-dimensional, and we have that $D$ is a $T$-Cartier divisor on the toric variety, then $\Psi_D$ is convex if and only if $\mathcal{O}(D)$ is generated by its sections. We can therefore construct $\Psi_D$ from $P_D$: $$\Psi_D(v)=\text{min}_{u\in P_D\cap M}\langle u,v\rangle = \text{min} \langle u,v \rangle$$ where the vertices of polytope $P_D$ are given by $u_i$.
		
		We have a corresponding mapping $\phi=\phi_D:X(\Delta)\rightarrow\mathbb{P}^{r-1}$ with $x\mapsto(\chi^{u_1}(x):\ldots:\chi^{u_r}(x))$ (where $r=\text{Card}(P_D\cap M)$ and $\mathbb{P}^{r-1}$ is the projective space) for a basis $\chi^u$ for the sections that generate $\mathcal{O}(D)$, where $u\in P_D\cap M$. If $N_\mathbb{R}=|\Delta|$ then $\phi_D$ is an embedding (meaning that $D$ is very ample) if and only if $\Psi_D$ is strictly convex and for every $n$-dimensional cone $\sigma$, the semigroup $S_\sigma$ is generated by $\{u-u_\sigma:u\in P_D\cap M\}$. And in fact on a complete toric variety, $D$ is ample, meaning that any positive multiple of $D$ is very ample. This is true if and only if $\Psi_D$ is strictly convex.
		
		For a polytope that is convex and $n$-dimensional, with vertices in $M$ has a fan $\Delta_P$ with complete toric variety $X_P=X(\Delta_P)$, and corresponding Cartier divisor $D_P$, where $\Psi(v)=\Psi_D(v)=\text{min}_{u\in P}\langle u,v\rangle=\text{min}_{u\in P\cap M}\langle u,v\rangle=\text{min}\langle u_i,v\rangle$, where $u_i$ are the vertices of $P$ and $\chi^{u_i}$ generates $\mathcal{O}(D)$ on $U_{\sigma_i}$, if $\sigma_i$ is the maximal cone for $u_i$.
		
		A fan $\Delta$ is called 'compatible with $P$ - the convex hull of any finite set in $M$', if $\Psi_P(v)=\text{min}_{u\in P}\langle u,v\rangle$ is linear on each $\sigma$ in this complete fan. The line bundle of the determined $D_P$ that is generated by its sections which are linear combinations of $\chi^u$ as $u$ varies over $P\cap M$.
		
		%%%%%cohomology of line bundles
		
		For $u\in M_\mathbb{R}$ where $u\geq\Psi$ on $|\Delta|$, $H^0(X,\mathcal{O}(D))_u=\mathbb{C}\cdot\chi^u$ if $u\in P_D\cap M$ and $H^0(X,\mathcal{O}(D))_u=0$ otherwise. And the sections of $\mathcal{O}(D)$ are the graded module: $H^0(X,\mathcal{O}(D))=\bigoplus H^0(X,\mathcal{O}(D))_u$. This can be put more simply by introducing $Z(u)$ to represent the closed conical subset of the support for each $u\in M$ and define it as $Z(u)=\{v\in|\Delta|:\langle u,v\rangle\geq\Psi(v)\}$. Then when $Z(u)$ is exactly equal to the support, then $u$ perfectly belongs to $P_D$. This can be put another way, if we introduce $H^0(|\Delta|\setminus Z(u))$ as the $H^0$ that denotes the $0^\text{th}$ ordinary of sheaf cohomology of the topological space with complex coefficients, then when that vanishes.
		
		As previously defined, $H^0(X,\mathcal{O}(D))=\bigoplus H^0(X,\mathcal{O}(D))_u$ where $H^0(X,\mathcal{O}(D))_u=H^0_{Z(u)}(|\Delta|)$, is the $0^\text{th}$ local cohomology group, in other words the relative group of the pair consisting of the support and the complement of $Z(u)$, and is defined, $H^0_{Z(u)}(|\Delta|)=H^0(|\Delta|,|\Delta|\setminus Z(u))=\text{Ker}(H^0(|\Delta|)\rightarrow H^0(|\Delta|\setminus Z(u)))$. And this is generally true for higher local cohomology groups $H^p_{Z(u)}=H^p(|\Delta|,|\Delta|\setminus Z(u);\mathbb{C})$ and higher sheaf cohomology groups $H^p(X,\mathcal{O}(D))$. And in fact, $H^p(X,\mathcal{O}(D))\simeq\bigoplus H^p(X,\mathcal{O}(D))_u$ are the canonical isomorphisms, where $H^p(X,\mathcal{O}(D))_u=H^p_{Z(u)}(|\Delta|)$ for all $p\geq0$.
		
		If the support $|\Delta|$ and $|\Delta|\setminus Z(u)$ are both convex then all the higher cohomology groups vanish, and this is usually the case for general affine varieties, and where $X$ is affine, the support is a cone and $\Psi$ is linear. This goes further to say that all higher cohomology groups of an ample line bundle on a complete toric variety vanish. And even further, if the support is convex and $\mathcal{O}(D)$ is generated by its sections then $H^p(X,\mathcal{O}(D))$ vanishes for all $p>0$.
		
		If also $X$ is complete, then the Cardinal, $\text{Card}(P_D\cap M)=\text{dim}H^0(X,\mathcal{O}(D))=\sum(-1)^p\text{dim}H^p(X,\mathcal{O}(D))=\chi(X,\mathcal{O}(D))$- with arithmetic genus given $\text{dim}H^0(X,\mathcal{O}_X)=1=\chi(X,\mathcal{O}_X)$.
		
		The $p^\text{th}$ cohomology of, $C^*$, the \u{C}ech complex, is $H^p(\mathcal{O}(D))$, so that $$C^p=\bigoplus_{\sigma_0,\ldots,\sigma_p}H^0(U_{\sigma_0}\cap\ldots\cap U_{\sigma_p},\mathcal{O}(D)),$$ the sum over all cones $\sigma_i$ in the fan, which is equal to $$\bigoplus_{u\in M}\bigoplus_{\sigma_0,\ldots,\sigma_p}H^0_{Z(u)\cap\sigma_0\ldots\sigma_p}(\sigma_0\cap\ldots\sigma_p).$$ Again we are aware that for all cones $\tau$ and for all $u$ for all $i>0$ that $H^i_{Z(u)\cap|\tau|}(|\tau|)=0$.
		
		If $Y$ is the union of a finite number of closed subspaces $Y_j$ and $Z$ is taken to be an arbitrary closed subspace of $Y$, and $Y'=Y_{j_0}\cap\ldots Y_{j_P}$ The sheaf $\mathcal{F}$ is on $Y$ so that $H^i_{Z\cap Y'}=(Y',\mathcal{F})$ vanishes for every $i$ greater than $0$ and all $Y'=Y_{j_0}\cap\ldots Y_{j_p}$. Then for the complex $C^*(\{Y_j\},\mathcal{F})$ whose $p^\text{th}$ term is $C^p(\{Y_j\},\mathcal{F})=\oplus_{j_0,\ldots,j_p}\Gamma_{Z\cap Y_{j_0}\cap\ldots\cap Y_{j_p}}(Y_{j_0}\cap\ldots\cap Y_{j_p}.\mathcal{F})$ where the rows are the resolutions of the complex $\Gamma_Z(Y,\mathcal{I}^*)$ for injective resolution $\mathcal{I}^*$ of $\mathcal{F}$ with double complex $C^*(\{Y_j\},\mathcal{I}^*)$.
		
		And finally for $\Delta'$ a refinement of $\Delta$, we have birational proper map $f:X'=X(\Delta')\rightarrow X=X(\Delta)$ and therefore $f_*(\mathcal{O}_{X'})=\mathcal{O}_X$ and $R^if_*(\mathcal{O}_{X'})=0$ for all $i>0$. So this means that if we take $X'$ to be the resolution of singularities then every $X$ has rational singularities.
		
		Briefly we can say that for $X=U_\sigma$, $\Gamma(X',\mathcal{O}_{X'})=\Gamma(X,\mathcal{O}_X)=A_\sigma$ and since $|\Delta'|=|\sigma|$, then $H^i(X',\mathcal{O}_{X'})=0$ for all $i>0$.

		%--------------------------------------------------------------------------------------
		%	MOMENTUM POLYTOPES SECTION OF FULTON
		%--------------------------------------------------------------------------------------
		
		\underline{To create the momentum polytopes from the toric varieties}

		Since toric varieties are naturally defined over integers, we are therefore not restricted to complex toric varieties, as we have been looking at thus far. We can rewrite the algebras with $\mathbb{Z}$ instead of $\mathbb{C}$: $U_\sigma=\text{Spec}(\mathbb{Z}[\sigma^\smallsmile\cap M])$. In fact for any field $K$, and $K$ is the multiplicative semigroup $K^*\cup\{0\}$ (so that for $K=\mathbb{R}\subset\mathbb{C}$, the real points of the toric variety) the semigroup homomorphisms $\text{Hom}_{sg}(\sigma^\smallsmile\cap M,K)$ describe the $K$-valued points of $U_\sigma$.
		
		This is also true for $K$ just a sub-semigroup of the complex field. For $\mathbb{R}_\geq$ then we have the retraction $z\mapsto|z|$. In fact the topological subspace, with retraction $U_\sigma\rightarrow(U\sigma)_\geq$ is determined $(U_\sigma)_\geq=\text{Hom}_{sg}(\sigma^\smallsmile\cap M,\mathbb{R}_\geq)\subset U_\sigma=\text{Hom}_{sg}(\sigma^\smallsmile\cap M,\mathbb{C})$ just as $\mathbb{R}_\geq\subset\mathbb{C}\rightarrow\mathbb{R}_\geq$.
		
		For the varieties we establish a corresponding retraction, $X(\Delta)_\geq\subset X(\Delta)\rightarrow X(\Delta)_\geq$.
		
		An example, for $N$ with partial basis $e_1,\ldots,e_k$ that generate $\sigma$, $(U_\sigma)_\geq$ is isomorphic to $n-k$ copies of $\mathbb{R}$ and $k$ copies of $\mathbb{R}_\geq$. So considering the nonsingular case, $X_\geq$ is therefore a manifold with corners. And when $X$ is singular, looking at the retraction $X_\geq$ can expose worse singularities.
		
		The retraction $X(\Delta)\rightarrow X(\Delta)_\geq$, the result of which is simply the quotient space of $X(\Delta)$ that results from the action of the compact torus $S_N$.
		
		Let's look at when the toric variety is the $n$-dimensional projective space, then $\mathbb{P}^n_\geq=\mathbb{R}_\geq^{n+1}\setminus\{0\}/\mathbb{R}^+$. If we consider the usual covering by affine open sets $U_i=U_{\sigma_i}$ then $(U_i)_\geq$ is made up of the points $(t_0:\ldots:1:\ldots:t_n)$ where $t\geq0$. And therefore $\mathbb{P}^n_\geq=\{(t_0,\ldots,t_n)\in\mathbb{R}^{n+1}\ \text{such that}\ t_i\geq0\ \text{and}\ t_0+\ldots t_n=1\}$. This is the standard simplex and so $$(x_0:\ldots:x_n)\mapsto\frac{1}{\sum|x_i|}(|x_0|,\ldots,|x_n|)$$ defines the retraction $\mathbb{P}^n\rightarrow\mathbb{P}^n_\geq$. The compact torus whose dimension is one less than $\text{Card}(i:t_i\neq0)$ is exactly the fiber over the point $(t_0,\ldots,t_n)$.
		
		The unit circle in $\mathbb{C}^*$ is denoted $S^1$ which is equal to $U(1)$. The compact torus $S_N$ which is $\text{Hom}(M,S^1)$ is clearly a subset of $\text{Hom}(M,\mathbb{C}^*)$ which is the algebraic torus. And so $T_N=S_N\times\text{Hom}(M,\mathbb{R}^+)=S_N\times\text{Hom}(M,\mathbb{R})=S_N\times N_\mathbb{R}$.
		
		Therefore The retraction $(O_\tau)_\geq=X(\Delta)_\geq\cap O_\tau=\text{Hom}(\tau^\perp\cap M,\mathbb{R}^+)$, and as $mathbb{R}^+$ is isomorphic with $\mathbb{R}$, this equals $\text{Hom}(\tau^\perp\cap M,\mathbb{R})=N(\tau)_\mathbb{R}$. Therefore $(O_\tau)_\geq=N(\tau)_\mathbb{R}$ is the quotient space according to the faithful action of $S_{N(\tau)}$ on $O_\tau=T_{N(\tau)}$. It is according to the projection $S_N$ to $S_{N(\tau)}$ that its action on $O_\tau$ is defined. This $S_{N(\tau)}$ (with $\text{dim}S_{N(\tau)}=n-\text{dim}(\tau))$ is exactly the fiber of the retraction $X\rightarrow X_\geq$ over $(O_\tau)_\geq$. Just as how the orbits $O_\tau$ fit together in $X$, so do the retracted spaces $(O_\tau)_\geq$ fit in the retracted variety $X_\geq$. 
		
		Consider the construction. A manifold whose corners are $X_\geq$ creates, if $X$ is complete, a sort of dual polyhedron to the original fan, where each cone of $n$-dimension has a vertex for each $X_\geq$. If cones share a $(n-1)$ face then two vertices join at an edge. And $X_\geq$ is homeomorphic to $P$ if $\Delta=\Delta_P$ comes from a convex polytope in $M_\mathbb{R}$.
		
		Another construction. For $r\in\mathbb{R}^+$ then the mapping $t\mapsto t^r$ is an automorphism of $\mathbb{R}_\geq$. Therefore determining the automorphism $(U_\sigma)_\geq=\text{Hom}_{sg}(\sigma^\smallsmile\cap M,\mathbb{R}_\geq)$ that fit together  to define the homeomorphism of $X_\geq$ to itself. For $r\in\mathbb{Z}^+$, with map $z\mapsto z^r$, it is an endomorphism of $\mathbb{C}$ with similar auto-endomorphism $X_\geq\subset X\rightarrow X_\geq$.
		
		Another construction. In accordance with the action of $T_N$ on $X$, we can consider the action of the quotient $T_N/S_N=N_\mathbb{R}$ on $X/S_N=X_\geq$. And the inclusion $X_\geq\subset X$ that is equivariant since $N_\mathbb{R}=\text{Hom}(M,\mathbb{R}^+)\subset T_N=\text{Hom}(M,\mathbb{C}^*)$.
		
		Another construction. X is the quotient space that arises from $S_N\times X_\geq\rightarrow X$.
		
		Another construction. An example of a deformation retract, for any $\tau$ would be the inclusion $(O_\tau)_\geq\subset(U\tau)_\geq$.

		%%%%%%%%%%%% Moment map
		
		A momentum map can be described as the action of a Lie group on a varieties. It is possible to construct maps for toric varieties and then draw the connection to momentum maps.
		
		A convex polytope $P$ in $M_\mathbb{R}$ with vertices in $M$, gives a toric variety $X(\Delta_P)$ and morphism $\varphi$ from $X$ to the projective space of dimension $r-1$, ($\varphi:X\rightarrow\mathbb{P}^{r-1}$). 
		
		For Cartier divisor $D$ preserved by $T$, $\chi(u)$ is a function for unique $\sigma^\smallsmile\cap M$ that generates the corresponding fractional ideal. 
		
		The momentum map $\mu:X\rightarrow M_\mathbb{R}$ is defined by $$\mu(x)=\frac{1}{\sum|\chi^u(x)|}\sum\limits_{u\in P\cap M}|\chi^u(x)|u$$ where $\chi^u$ for $u\in P\cap M$ are the sections of $\varphi$ and specifically, $|\chi^u(x)|=|\chi^u(t\cdot x)|=|\chi^u(t)|\cdot|\chi^u(x)|=|\chi^u(t\cdot x)|$, for $t\in S_N$ and $x\in X$ and $\mu$ $S_N$-invariant. 
		
		In other words, $$\tilde{\mu}:X_\geq\rightarrow M_\mathbb{R},$$ is the map prompted on the quotient space $X/S_N=X_\geq$ by $\mu$.
		
		Now for $Q$ a face of polytope $P$, with cone of the fan $\sigma$, the above defined $\tilde{\mu}$ does in fact map $(O_\sigma)_\geq\xrightarrow{\simeq}\text{Int}(Q)$ bijectively, where $\text{Int}(Q)$ is the interior of $Q$. If we introduce $\sum|\chi^{u'}(x)|$ as the sum over all $u'$ in $P\cap M$ or in a subset that contains vertices of $P$, then let $\rho_u(x)=|\chi^u(x)|/\sum|\chi^{u'}(x)|$ therefore $\mu(x)=\sum\rho_u(x)u$ where of course $0\leq\rho_u(x)\leq 1$. For $x$ in $(O_\sigma)_\geq$ is also in the inclusion created by extending otside of $\sigma^\perp$ by zero. $\text{Hom}(\sigma^\perp\cap M,\mathbb{R}^+\subset\text{Hom}_{sg}(\sigma^\perp\cap M),\mathbb{R}_\geq)$. For that same $x$, if $u\in Q$ then $\rho_u(x)>0$ and for $u\notin Q$ then $\rho_u(x)$ is exactly zero.
		
		For the finite set of vectors $u_1,\ldots,u_r$ in the dual space $V^*$ of the finite dimensional real vector space $V$, if $K$ is the convex hull of these vectors, and is not contained in a hyperplane. Then for $\varepsilon_1,\ldots,\varepsilon_r$ that represent any positive numbers, then the map $\rho_i$ that takes $V$ to $\mathbb{R}$ is defined as $$\rho_(x)=\varepsilon_ie^{u_i(x)}/(\varepsilon_1e^{u_1(x)}+\ldots+\varepsilon_re^{u_r(x)})$$ and therefore the map $\mu:V\rightarrow V^*$ is the real analytic isomorphism of $V$ onto $\text{Int}K$ and is defined, $\mu(x)=\rho_1(x)u_1+\ldots+\rho_r(x)u_r$.
		
		In fact due to the actions of $T_N$ on $X_P$ and $(\mathbb{C}^*)^r$ on $\mathbb{P}^{r-1}$ then we have a map from $X_P$ to $\mathbb{P}^{r-1}$, where $T_N=\text{Hom}(M,\mathbb{C}^*)\rightarrow\text{Hom}(\mathbb{Z}^r,\mathbb{C}^*)=T$ by the map $\mathbb{Z}^r\rightarrow M$ which takes basic vectors to points of $P\cap M$. And therefore we have a momentum map $\mathfrak{M}:\mathbb{P}^{r-1}\rightarrow\text{Lie}S^*=\mathbb{R}^*=\mathbb{R}$ for the action of $(S^1)^r$ on $\mathbb{P}^{r-1}$. Specifically we have $$\mathfrak{M}(x)=\frac{1}{\sum|x_i|^2}\sum^r_{i=1}|x_i|^2e_i^* \label{mmapfultonpoint}$$ up to a scalar factor, for $x\in\mathbb{P}^{r-1}$ and $v=(x_1,\ldots,x_r)\in \mathbb{C}^r$.
		
		And in fact, $$X_P\xrightarrow{\phi}\mathbb{P}^{r-1}\xrightarrow{\mathfrak{P}}(\mathbb{R}^r)^*\rightarrow M_\mathbb{R}$$.
		
	\end{proof}
	
	\section*{Acknowledgements}
	The author is funded by the Marie-Curie individual fellowship and would like to thank the European Commission for fully funding project 898145 — ROBOTTOPES. The author would also like to thank Professor Manuel de Leon for bringing working on contact manifolds into consideration and reading drafts. The author would also like to thank Yael Karshon for some consultations.
	
	%----------------------------------------------------------------------------------------
	%	BIBLIOGRAPHY
	%----------------------------------------------------------------------------------------
	
%	\renewcommand{\refname}{\spacedlowsmallcaps{References}} % For modifying the bibliography heading
	
%	\bibliographystyle{unsrt}
	
%	\bibliography{sample.bib.tex} % The file containing the bibliography

	%----------------------------------------------------------------------------------------
	
\end{document}